\definecolor{sepia}{cmyk}{0, 0.83, 1, 0.70}
\newtheorem{theorem}{Theorem}
\newtheorem{corollary}[theorem]{Corollary}
\newtheorem{lemma}[theorem]{Lemma}
\theoremstyle{definition}
\newtheorem{definition}[theorem]{Definition}
\newtheorem{remark}[theorem]{Remark}
\newtheorem{example}[theorem]{Example}
\numberwithin{equation}{section} 
\numberwithin{theorem}{section}  
\numberwithin{figure}{section}   
\numberwithin{table}{section}    
\renewenvironment{proof}[1][\proofname]
{\par
	\pushQED{$\blacksquare$} 
	\normalfont\topsep6\p@\@plus6\p@\relax
	\trivlist
	\item[\hskip\labelsep\bfseries#1\@addpunct{.}]
	\ignorespaces}
{\popQED \endtrivlist\@endpefalse}
\DeclareMathOperator*{\interior}{int}
\DeclareMathOperator*{\fix}{Fix}
\DeclareMathOperator*{\argmin}{argmin}
\begin{document}
\title{\Large\vspace{-4em}\textbf{Finitely Convergent Iterative Methods with Overrelaxations \\Revisited}}
\author{Victor I. Kolobov\thanks{Department of Computer Science, The Technion -- Israel Institute of Technology, 32000 Haifa, Israel, \texttt{kolobov.victor@gmail.com}}
\and Simeon Reich\thanks{Department of Mathematics, The Technion -- Israel Institute of Technology, 32000 Haifa, Israel, \texttt{sreich@technion.ac.il}}
\and Rafa\l\ Zalas\thanks{Department of Mathematics, The Technion -- Israel Institute of Technology, 32000 Haifa, Israel, \texttt{zalasrafal@gmail.com}}}

\maketitle
	
\begin{abstract}

We study the finite convergence of iterative methods for solving convex feasibility problems. Our key assumptions are that the interior of the solution set is nonempty and that certain overrelaxation parameters converge to zero, but with a rate slower than any geometric sequence. Unlike other works in this area, which require divergent series of overrelaxations, our approach allows us to consider some summable series. By employing quasi-Fej\'{e}rian analysis in the latter case, we obtain additional asymptotic convergence guarantees, even when the interior of the solution set is empty.
		
\vskip2mm\noindent
\textbf{Keywords:} Convex feasibility problem, cutter, finite convergence, metric projection
\vskip1mm\noindent
\textbf{Mathematics Subject Classification (2010):} 47J25, 47N10, 90C25
\end{abstract}
	
\section{Introduction}
We consider the following consistent convex feasibility problem (CFP) in a real Hilbert space $\mathcal{H}$:
\begin{equation}\label{cfp}
	\text{Find}\quad x\in C\cap Q\quad\text{with}\quad C:=\bigcap_{i\in I}C_i,
\end{equation}
where each one of the sets $C_i \subset \mathcal H$, $i\in I:=\{1,\ldots,m\}$, $m \in \mathbb N_+$, as well as the set $Q \subset \mathcal H$, are closed and convex.  We represent each $C_i$ as the fixed point set  of some operator $T_i\colon\mathcal{H}\rightarrow\mathcal{H}$, that is, $C_i = \fix T_i:=\{x \colon T_i(x)=x\}$. We restrict our considerations to a very broad class of operators, which, following \cite{Ceg12}, we call here \emph{cutters}; see Definition \ref{def:QNE} below. Since these operators can be considered a variation of firmly nonexpansive operators \cite[Theorem 2.2.5]{Ceg12}, they are called \emph{firmly quasi-nonexpansive} by some authors; see \cite{BC17}. Such an approach allows us to use  not only metric projections $P_{C_i}$, which indeed are  firmly  nonexpansive, but also subgradient projections $P_{f_i}$, which may happen to be discontinuous \cite[Example 29.47]{BC17}. The latter operators are particularly useful when applied to the inequality constraints $C_i = \{x \colon f_i(x)\leq 0\}$ defined by weakly lower semicontinuous functions $f_i \colon \mathcal{H} \to \mathbb{R}$, $i\in I$; see Example \ref{ex:subProj}.
	
We study a class of iterative methods with the sequence of approximations $\{x_k\}_{k=0}^\infty$ defined by
\begin{equation}\label{int:xk}
  x_0\in Q,\quad x_{k+1}:=
  P_Q\left(x_k+\alpha_{k}\sum_{i\in I_k}
  \lambda_{i,k}
  \beta_{i,k}(x_k)
  \Big(T_i(x_k)-x_k\Big)\right),
\end{equation}
where $\alpha_k \in (0, 2)$ are \emph{relaxation parameters}, the weights $\lambda_{i,k} \in (0, 1]$ satisfy $\sum_{i\in I_k} \lambda_{i,k} = 1$ and $\{I_k\}_{k=0}^\infty$ is a given \emph{control sequence} in $I$, that is, $I_k\subset I$. Prototypical versions of method \eqref{int:xk} with functionals  $\beta_{i,k}(x) = \beta_k(x) \geq 1$  and $Q = \mathcal H$ can be found in many papers; see, for example, \cite{BB96, BNPH15, BLT15, Ceg12, CRZ18, Com96, Com01, CET20, KRZ17, ZNL18}. Note here that in all of these works, the authors discuss asymptotic convergence such as weak or norm convergence, with some of them providing polynomial or linear error bounds. However, there are situations where the produced iterates may never be feasible; see, for example, \cite[Theorem 7]{LTT20}.

In this paper we consider a variation of \eqref{int:xk}, the main idea of which is the extension of each nonzero vector $T_i(x_k) - x_k$ by using the scalar $\beta_{i,k}(x_k) >1$. Following \cite{KRZ20}, we define $\beta_{i,k}\colon \mathcal H \to [0,\infty)$ by
\begin{equation}\label{int:betak}
  \beta_{i,k}(x):=
  \begin{cases}\displaystyle
    \frac{\frac{r_k}{ \varphi_i(x)} +\|T_i (x)-x\|}{\|T_i(x)-x\|},
    & \text{if } T_i(x)\neq x \\
    0, & \text{otherwise,}
  \end{cases}
\end{equation}
where $r_k \in (0,\infty)$ are the \emph{overrelaxation parameters} and $\varphi_i \colon \mathcal H \to (0,\infty)$, $i \in I$, are the \emph{overrelaxation functionals}.

The above definition captures two important instances of $\beta_{i,k}$, which have been studied in the literature. In the first one, we have $\varphi_i(x) := 1$; see, for example, \cite{BWWX15, Cro04, Pol01}. The second instance corresponds to inequality constraints, where  $\varphi_i(x) := \|g_i(x)\|$ for all $x \notin C_i$, $g_i(x) \in \partial f_i(x)$ and where $T_i:= P_{f_i}$; see, for example, \cite{CCP11, DePI88, IM86}. Clearly, the latter case motivates the use of overrelaxation functionals. For a more detailed description of these methods, see \cite[Table 1.1]{KRZ20}.

It was shown in \cite[Theorem 4.7 and Remark 4.11]{KRZ20} that, under some conditions, all bounded trajectories must reach the solution set within a finite number of steps, a property to which we refer as \emph{finite convergence}. The two main conditions on which we would like to focus in this brief introduction are: (i) the constraint qualification $\interior(C) \cap Q \neq \emptyset$, and (ii)
\begin{equation}\label{int:rk}
	r_k\downarrow 0\qquad\text{and}\qquad \sum_{k=0}^\infty r_k=+\infty.
\end{equation}
The down-arrow symbol ``$\downarrow$'' stands here for monotone convergence, where $r_{k+1} \leq r_k$.

In the present paper we investigate the finite convergence properties of method \eqref{int:xk}--\eqref{int:betak} assuming, as in (ii), that indeed $r_k \downarrow 0$ but with a \emph{rate slower than any geometric sequence}, that is:
\begin{equation}\label{int:STAG}
  \text{\emph{For each $q\in(0,1)$ and $c>0$ there is $K \geq 0$ s.t. $r_k > cq^k$ for all $k \geq K$.}}
\end{equation}
This holds, in particular, when the rate is \emph{sublinear}, that is, when $r_{k+1}/r_k \to 1$ as $k \to \infty$. In Theorem \ref{thm:main}, which is the main result of our paper, we show that this version of condition (ii) also leads to finite convergence. At this point we note that our analysis is closely aligned with arguments used in \cite{CCP11, DePI88, IM86}, where condition \eqref{int:STAG} did appear. On the other hand, our analysis differs from \cite{BWWX15, Cro04, KRZ20, Pol01}, which relied heavily on \eqref{int:rk}.

The main adavantage of \eqref{int:STAG} over \eqref{int:rk} is that some sequences of overrelaxations satisfying \eqref{int:STAG} may still form a summable series $\sum_{k=0}^{\infty} r_k < \infty$. Take, for example, $r_k:=\frac{1}{k^\alpha}$ with $\alpha>1$. This turns out to be an important factor in the study of the asymptotic convergence of method \eqref{int:xk}--\eqref{int:betak}, when considered under simplified conditions. In particular, this includes constraint qualification reduced to just $C \cap Q \neq \emptyset$. Indeed, in this case, we have shown that all the bounded trajectories $\{x_k\}_{k=0}^\infty$ are quasi-Fej\'er monotone sequences of type I with respect to $C \cap Q$. Consequently, by applying the apparatus of quasi-Fej\'erian analysis developed in \cite{Com01}, we have formulated sufficient conditions for weak and norm convergence of \eqref{int:xk}--\eqref{int:betak}. Note here that in both cases, the assumed regularities of the operators are weaker than the bounded linear regularity of Theorem \ref{thm:main}. This constitutes our second main result which is presented in Theorem \ref{thm:main2}.

We would like to emphasize here that both our results, Theorem \ref{thm:main} and Theorem \ref{thm:main2}, share a common instance of sufficient conditions. Thus, both of these theorems, when combined, certify asymptotic and, moreover, finite convergence of method \eqref{int:xk}--\eqref{int:betak} depending on, a possibly unknown, form of the constraint qualification. This was not the case in \cite{BWWX15, CCP11, Cro04, DePI88, IM86, IM87, KRZ20}, where $\interior (C) \cap Q \neq \emptyset$, or even the stronger Slater condition, were essential. We provide an example of such a result in Corollary \ref{cor:proj}.

Despite of the above-mentioned advantage, there are a few aspects, where our results are less general than those obtained in \cite{KRZ20}. This applies, in particular, to the use of intermittent controls, which excludes repetitive and random control sequences, both of which were allowed in \cite{KRZ20}. Furthermore, Theorem \ref{thm:main} requires bounded linear regularity of the operators $T_i$, which was not necessary in \cite{KRZ20}. We also note that bounded linear regularity was not explicitly mentioned in \cite{CCP11, DePI88, IM86, IM87}. However, it was satisfied therein due to Slater's condition and \cite[Example 2.11]{CRZ20}. On the other hand, the regularity of operators and sets has now become a standard assumption used in the analysis of the basic Fej\'er monotone methods; see, for example, \cite{BB96, BNPH15, BLT15, CRZ18, CET20, KRZ17}.

Despite its general form, our framework \eqref{int:xk}--\eqref{int:betak} does not capture many instances of iterative methods  which have the finite convergence property.  See, for example, \cite{Fuk82, Pan14}, where the main step of the iterative method consists of a projection onto a polyhedral approximation of the inequality constraints obtained by using sublinear overrelaxations. See also \cite{IM87} which employs an abstract, Fej\'{e}r monotone, algorithmic operator and divergent series of overrelaxations. Other examples of iterative methods, which do not involve any overrelaxations, can be found, for example, in \cite{BD17, BDNP16} which discusses the Douglas-Rachford method, in \cite{Pan15} which presents a variation of the Haugazeau method and in \cite{BBHK20} which concerns Dykstra's algorithm.

The organization of our paper is as follows. In Section \ref{sec:preliminaries} we present a few technical results, which facilitate our study. In Section \ref{sec:finiteConv} we present our main result regarding finite convergence. In Section \ref{sec:asymptotic} we discuss the asymptotic behaviour of our method. In the last section, we provide an example where both of our main results overlap.

\section{Preliminaries} \label{sec:preliminaries}
	
Let $C$ be a nonempty proper subset of $\mathcal{H}$. The \emph{distance functional} $d(\cdot,C)\colon\mathcal{H}\to[0,\infty)$ is defined by $d(x,C):=\inf\{\|x-z\|:z\in C\}$, $x\in\mathcal{H}$. The \emph{signed distance functional} $\varphi(\cdot,C) \colon \mathcal{H}\to(-\infty,\infty)$ is defined by $\varphi(x,C):=d(x,C)-d(x,\mathcal{H}\setminus C)$, $x\in\mathcal{H}$. It is well known that when $C$ is closed and convex, then both functionals, $d(\cdot,C)$ and $\varphi(\cdot,C)$, are convex and $1$-Lipschitz; see \cite[Page 940]{BR80} for $d(\cdot,C)$ and \cite{HU77} for $\varphi(\cdot,C)$. See also the more recent \cite{LWL19}.
	
For a given function $f\colon\mathcal{H}\rightarrow\mathbb{R}$, the sublevel set at level zero is defined by
\begin{equation} \label{def:sublevelSet}
	S_f:=\{x\in\mathcal{H}:f(x)\leq 0\}.
\end{equation}
	
\begin{lemma}\label{lem:erosion}
  Let $f\colon\mathcal H \to \mathbb R$ be a convex function, let $0\leq\varepsilon\leq r$ and assume that $S_{f+r} \neq \emptyset$. Then for all $x \in \mathcal{H} \setminus S_f$, we have
	\begin{equation}\label{lem:eqerosion:ineq}
    \frac{d(x,S_{f+\varepsilon})}{f(x)+\varepsilon} \leq \frac{d(x,S_{f+r})}{f(x)+r}.
	\end{equation}
\end{lemma}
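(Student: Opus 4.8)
The plan is to exploit the convexity of $f$ along the segment joining $x$ to a point of $S_{f+r}$, thereby producing an \emph{explicit} point of $S_{f+\varepsilon}$ lying close to $x$, and then to optimize over the choice of the endpoint. First I would record the trivial preliminaries: since $x\notin S_f$ we have $f(x)>0$, so both $f(x)+\varepsilon$ and $f(x)+r$ are strictly positive and the quotients in \eqref{lem:eqerosion:ineq} are well defined; moreover, $\varepsilon\le r$ forces $S_{f+r}\subseteq S_{f+\varepsilon}$, so $S_{f+\varepsilon}\neq\emptyset$ and $d(x,S_{f+\varepsilon})<\infty$.

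The core step is as follows. Fix an arbitrary $z\in S_{f+r}$, so that $f(z)\le -r$, and put $t:=\frac{f(x)+\varepsilon}{f(x)+r}$, which lies in $(0,1]$ because $0\le\varepsilon\le r$ and $f(x)>0$. Consider $x_t:=(1-t)x+tz$. By convexity of $f$,
\[
  f(x_t)\le (1-t)f(x)+tf(z)\le (1-t)f(x)-tr = f(x)-t\bigl(f(x)+r\bigr) = -\varepsilon,
\]
where the last equality is exactly the defining relation for $t$. Hence $f(x_t)+\varepsilon\le 0$, i.e. $x_t\in S_{f+\varepsilon}$, and therefore
\[
  d(x,S_{f+\varepsilon})\le \|x-x_t\| = t\,\|x-z\| = \frac{f(x)+\varepsilon}{f(x)+r}\,\|x-z\|.
\]

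Since the multiplier $t$ is independent of $z$, I would then take the infimum over $z\in S_{f+r}$ to get $d(x,S_{f+\varepsilon})\le \frac{f(x)+\varepsilon}{f(x)+r}\,d(x,S_{f+r})$, and dividing by $f(x)+\varepsilon>0$ yields \eqref{lem:eqerosion:ineq}. There is no serious obstacle here; the only points needing a moment's care are checking that the convex-combination parameter $t$ genuinely belongs to $(0,1]$ (which uses both $x\notin S_f$ and $\varepsilon\le r$) and noting that no closedness or continuity of $f$ is required — we never pass to $\cl S_{f+r}$, and distances to a set and to its closure agree in any case.
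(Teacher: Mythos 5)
Your proof is correct and follows essentially the same route as the paper: both construct a convex combination of $x$ with a point $z\in S_{f+r}$ that lands in $S_{f+\varepsilon}$ and then estimate $\|x-x_t\|$. The only (harmless) difference is that your coefficient $t=\frac{f(x)+\varepsilon}{f(x)+r}$ is independent of $z$, letting you take an infimum over $z$ directly, whereas the paper uses the $z$-dependent coefficient $\frac{f(x)+\varepsilon}{f(x)-f(z)}$ and then specializes $z=P_{S_{f+r}}(x)$.
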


\begin{proof}
Let $z\in S_{f+r}$ and let
\begin{equation}
	\gamma:=1-\frac{f(x)+\varepsilon}{f(x)-f(z)}.
\end{equation}
One can verify that $0\leq\gamma\leq 1$. Let $y := (1-\gamma)z+\gamma x$. Using the convexity of $f(\cdot)$, we obtain
\begin{equation}
  f(y) \leq (1-\gamma)f(z) + \gamma f(x)
	= \frac{f(x)f(z) + \varepsilon f(z)-f(z)f(x)-\varepsilon f(x)}{f(x)-f(z)}
	= -\varepsilon.
\end{equation}
Hence $y\in S_{f+\varepsilon}$. Thus we get
\begin{equation}
	d(x,S_{f+\varepsilon})\leq\|x-y\|
	= (1-\gamma)\|x-z\|
	= \frac{f(x)+\varepsilon}{f(x)-f(z)}\|x-z\|
	\leq (f(x)+\varepsilon)\frac{\|x-z\|}{f(x)+r}.
\end{equation}
Now choose $z=P_{S_{f+r}}(x)$ to obtain
\begin{equation}
  d(x,S_{f+\varepsilon}) \leq (f(x)+\varepsilon) \frac{d(x,S_{f+r})}{f(x)+r}.
\end{equation}
\end{proof}

In particular, when $f = \varphi(\cdot,C)$ for a closed and convex set $C$, then the sublevel set $S_{f+\varepsilon}$ becomes the $\varepsilon$-\emph{erosion} of $C$ given by
\begin{equation}\label{def:erosion}
  C^\varepsilon := \{ x \in C \colon B(x,\varepsilon) \subset C\},
\end{equation}
where $B(x,\varepsilon):=\{z \in \mathcal H \colon \|z-x\|\leq \varepsilon\}$ is a closed ball and inequality \eqref{lem:eqerosion:ineq} can be equivalently written as
\begin{equation}\label{eq:erosion}
  \frac{d(x,C^\varepsilon)}{d(x,C)+\varepsilon} \leq \frac{d(x,C^r)}{d(x,C)+r}.
\end{equation}

\begin{lemma}\label{lem:subdiff}
For each $k\in\mathbb{N}$, let $f_k\colon\mathcal{H}\rightarrow\mathbb{R}$ be a convex and lower semicontinuous function, and assume that $f(z) := \sup_{k\in\mathbb{N}} f_k (z)<0$ for some $z\in\mathcal H$. Then for all $r>0$, we have
\begin{equation}\label{lem:subdiff:ineq}
  \inf \Big\{\|g_k(x)\| \colon  x\in B(z,r),\ f_k(x) \geq 0 \text{ and } g_k(x) \in \partial f_k(x)\Big\} \geq \frac{-f(z)}{r}=:\lambda>0.
\end{equation}
\end{lemma}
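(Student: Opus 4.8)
The plan is to reduce the statement to a single application of the subgradient inequality evaluated at the feasibility witness $z$, followed by Cauchy--Schwarz. First I would dispose of the trivial case: if the set appearing in the infimum is empty, then the infimum equals $+\infty$ and the bound holds vacuously; moreover $\lambda = -f(z)/r > 0$ simply because $f(z) < 0$ by hypothesis. So from now on fix $k \in \mathbb{N}$ and a point $x \in B(z,r)$ with $f_k(x) \geq 0$ and $g_k(x) \in \partial f_k(x)$, and aim to show $\|g_k(x)\| \geq -f(z)/r$.

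Next I would write down the defining inequality of the subdifferential, evaluated at the specific point $z$, namely
\[
 f_k(z) \geq f_k(x) + \langle g_k(x),\, z - x\rangle,
\]
and rearrange it into
\[
 \langle g_k(x),\, x - z\rangle \geq f_k(x) - f_k(z).
\]
Now I bound the right-hand side from below in a way that is uniform in $k$: by assumption $f_k(x) \geq 0$, while $f_k(z) \leq \sup_{j\in\mathbb{N}} f_j(z) = f(z)$, so that $f_k(x) - f_k(z) \geq -f_k(z) \geq -f(z) > 0$. (In passing, this is consistent with $f_k(z) \leq f(z) < 0 \leq f_k(x)$, which forces $x \neq z$, though we shall not need this fact explicitly.)

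Finally, applying the Cauchy--Schwarz inequality together with $\|x - z\| \leq r$ yields
\[
 r\,\|g_k(x)\| \;\geq\; \|x - z\|\,\|g_k(x)\| \;\geq\; \langle g_k(x),\, x - z\rangle \;\geq\; -f(z),
\]
hence $\|g_k(x)\| \geq -f(z)/r = \lambda$. Since the triple $(k, x, g_k(x))$ was an arbitrary element of the set over which the infimum is taken, passing to the infimum gives \eqref{lem:subdiff:ineq}. There is essentially no genuine obstacle here; the only point requiring a little care — a matter of bookkeeping rather than difficulty — is to test the subgradient inequality at $z$ rather than at an arbitrary point, so that the uniform negativity of $f_k(z)$ supplied by $f(z) < 0$ works in our favour. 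Note that lower semicontinuity of the $f_k$ is not needed for this argument (it guarantees only that $f$ itself is lower semicontinuous, which is used elsewhere).
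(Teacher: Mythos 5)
Your proof is correct. The paper does not actually prove this lemma itself but defers to \cite[Lemma 3.3]{KRZ20}; your argument --- testing the subgradient inequality at the Slater point $z$, bounding $f_k(x)-f_k(z)\geq -f(z)$, and finishing with Cauchy--Schwarz and $\|x-z\|\leq r$ --- is the standard proof of that result, and your side remarks (the vacuous case of an empty index set, and the fact that lower semicontinuity is not needed once a subgradient is assumed to exist) are accurate.
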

	
\begin{proof}
  See \cite[Lemma 3.3]{KRZ20}.
\end{proof}

\subsection{Quasi-nonexpansive operators}

\begin{definition}\label{def:QNE}
Let $T:\mathcal H\rightarrow\mathcal H$ be an operator with a fixed point,  that is, $\fix T=\{z\in \mathcal H\colon z=T(z)\} \neq \emptyset$. We say that $T$ is
\begin{enumerate}[(i)]
	\item \textit{quasi-nonexpansive} (QNE) if for all $x\in\mathcal H$ and
  all $z\in\fix T$, we have $\| T (x) -z\|\leq\| x-z\|$.
			
	\item $\rho$\textit{-strongly quasi-nonexpansive} ($\rho$-SQNE), where
  $\rho\geq 0$, if for all $x\in\mathcal H$ and all $z\in\fix T$, we have $\| T(x)-z\|^2\leq\| x-z\|^2-\rho\|T(x) -x\|^2$.
			
	\item a \textit{cutter} if for all $x\in\mathcal H$ and all $z\in\fix T$, we have
  $\langle z-T(x) ,x-T(x) \rangle\leq 0$.
	\end{enumerate}
\end{definition}

 Note that the operators mentioned in Definition \ref{def:QNE} can be found under different names in the literature. For example, cutters appear as $\mathcal T$-\emph{class} in \cite{BC01, Com01} and \emph{firmly quasi-nonexpansive} operators in \cite{BC17}. For more details concerning this topic, we refer the interested reader to \cite[pages 47 and 53--54]{Ceg12}.  A comprehensive review of the properties of QNE, SQNE and cutter operators can be found,  for example,  in \cite[Chapter 2]{Ceg12}.

\begin{example}[Metric Projection]
  Let $C\subset \mathcal H$ be nonempty, closed and convex. The \emph{metric projection} $P_C(x):=\argmin_{z\in C}\|z-x\|$  is a cutter and  $\fix P_C = C$; see, for example, \cite[Theorem 1.2.4]{Ceg12}. We remark in passing that $P_C$ is actually firmly nonexpansive \cite[page 18]{GR84}.
\end{example}

\begin{example}[Subgradient Projection]\label{ex:subProj}
  Let $f\colon \mathcal{H}\to \mathbb{R}$ be a lower semicontinuous and convex function with nonempty sublevel set $S(f,0):=\{x\in \mathcal{H}\colon f(x)\leq 0\}\neq \emptyset $. For each $x\in \mathcal{H}$, let  $g(x)$  be a chosen subgradient from the subdifferential set $\partial f(x):=\{g\in \mathcal{H}\colon f(y)\geq f(x)+\langle g,y-x\rangle \text{, for all }y\in \mathcal{H}\}$,  which, by \cite[Proposition 16.27]{BC17}, is nonempty. The  \emph{subgradient projection}
  \begin{equation}\label{ex:subProj:eq}
    P_{f}(x):=
    \begin{cases}
      x-\frac{f(x)}{\|g(x)\|^2}g(x), & \mbox{if } f(x)>0 \\
      x, & \mbox{otherwise}
    \end{cases}
  \end{equation}
  is a cutter and $\fix P_{f}=S(f,0)$; see, for example,  \cite[Proposition 2.3]{BC01} or  \cite[Corollary 4.2.6]{Ceg12}.
\end{example}
	
For a given $T\colon\mathcal H\rightarrow\mathcal H$ and $\alpha \in (0,\infty)$, the operator $U(x):=x +\alpha(T(x)-x)$ is called an $\alpha$-\textit{relaxation of} $T$. We call $\alpha$ a \textit{relaxation parameter}.  It is easy to see that $\fix T = \fix U$.  Usually, in connection with iterative methods, the relaxation parameter $\alpha$ is assumed to belong to the interval $(0,2]$.
	
\begin{lemma}[Projected Relaxation of a Cutter] \label{lem:ProjRelCutter}
  Let $T:\mathcal{H}\to\mathcal{H}$ be a cutter, $\alpha\in(0,2)$, and let $Q \subset \mathcal{H}$ be a closed and convex set such that $Q\cap\fix T \neq \emptyset$. Then the relaxation $U(x) := x+\alpha(T(x)-x)$ is $[(2-\alpha) /\alpha]$-SQNE. Moreover, the projected relaxation $P_Q U$ is $[(2-\alpha) /2]$-SQNE, where $\fix P_QU=Q\cap\fix T$.
\end{lemma}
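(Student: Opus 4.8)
The plan is to treat the three assertions separately, fixing once and for all an arbitrary point $x \in \mathcal H$ together with a point $z \in Q \cap \fix T$, which exists by hypothesis; recall from the discussion preceding the lemma that $\fix U = \fix T$, so that $z \in \fix U$ as well.

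\textbf{The relaxation $U$.} Since $T$ is a cutter, $\langle z - T(x), x - T(x)\rangle \le 0$; writing $x - z = (x - T(x)) + (T(x) - z)$, this rearranges to $\langle x - z, T(x) - x\rangle \le -\|T(x) - x\|^2$. As $U(x) - x = \alpha(T(x) - x)$, I would expand
\begin{equation*}
  \|U(x) - z\|^2 = \|x - z\|^2 + 2\alpha\langle x - z, T(x) - x\rangle + \alpha^2\|T(x) - x\|^2
\end{equation*}
and substitute the previous bound to obtain $\|U(x) - z\|^2 \le \|x - z\|^2 - \alpha(2 - \alpha)\|T(x) - x\|^2$. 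Since $\|U(x) - x\|^2 = \alpha^2\|T(x) - x\|^2$, this is exactly $\|U(x) - z\|^2 \le \|x - z\|^2 - \tfrac{2 - \alpha}{\alpha}\|U(x) - x\|^2$, i.e. $U$ is $[(2-\alpha)/\alpha]$-SQNE.

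\textbf{The fixed point set.} The inclusion $Q \cap \fix T \subset \fix P_Q U$ is immediate, since every such point is fixed by $U$ and lies in $Q = \fix P_Q$. Conversely, if $P_Q U(x) = x$, then $x \in Q$ (the range of $P_Q$), and, using that $P_Q$ is nonexpansive with $P_Q z = z$ together with the estimate just proved,
\begin{equation*}
  \|x - z\|^2 = \|P_Q U(x) - z\|^2 \le \|U(x) - z\|^2 \le \|x - z\|^2 - \tfrac{2 - \alpha}{\alpha}\|U(x) - x\|^2,
\end{equation*}
which forces $U(x) = x$; hence $x \in Q \cap \fix T$.

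\textbf{The projected relaxation.} Put $u := U(x)$ and $w := P_Q u = P_Q U(x)$. Since $P_Q$ is a cutter and $z \in Q = \fix P_Q$, it is $1$-SQNE, i.e. $\|w - z\|^2 \le \|u - z\|^2 - \|w - u\|^2$; combining this with the first step yields
\begin{equation*}
  \|w - z\|^2 \le \|x - z\|^2 - \tfrac{2 - \alpha}{\alpha}\|u - x\|^2 - \|w - u\|^2 .
\end{equation*}
It therefore suffices to show that $\tfrac{2 - \alpha}{2}\|w - x\|^2 \le \tfrac{2 - \alpha}{\alpha}\|u - x\|^2 + \|w - u\|^2$. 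Expanding $\|w - x\|^2 = \|u - x\|^2 + 2\langle u - x, w - u\rangle + \|w - u\|^2$ and using $\tfrac{2-\alpha}{2} < \tfrac{2-\alpha}{\alpha}$ and $\tfrac{2-\alpha}{2} < 1$ (both valid for $\alpha\in(0,2)$), this reduces to the single scalar inequality $(2 - \alpha)\langle u - x, w - u\rangle \le \tfrac{(2 - \alpha)^2}{2\alpha}\|u - x\|^2 + \tfrac{\alpha}{2}\|w - u\|^2$, which follows from the Cauchy--Schwarz inequality followed by Young's inequality $st \le \tfrac{\lambda}{2}s^2 + \tfrac{1}{2\lambda}t^2$ applied with $\lambda = (2 - \alpha)/\alpha$. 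This gives the claimed $[(2-\alpha)/2]$-SQNE property.

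The only genuinely delicate point is this last estimate. A crude triangle-inequality bound on $\|w-x\|^2$ loses the sharp constant, so one must keep the cross term $\langle u - x, w - u\rangle$ (which need not have a favourable sign) and absorb it by a carefully calibrated use of Young's inequality; it is precisely the choice $\lambda = (2-\alpha)/\alpha$ that makes the two resulting coefficients match the budget $\tfrac{2-\alpha}{\alpha}\|u-x\|^2+\|w-u\|^2$ and thereby produces exactly the constant $(2-\alpha)/2$.
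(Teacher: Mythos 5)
Your proof is correct. The paper itself does not argue the lemma directly: it simply invokes \cite[Theorem 2.1.39]{Ceg12} (a cutter's $\alpha$-relaxation is $\tfrac{2-\alpha}{\alpha}$-SQNE) and \cite[Corollary 2.1.47]{Ceg12} (the composition of a $\rho_1$-SQNE and a $\rho_2$-SQNE operator with a common fixed point is $\rho$-SQNE with $\rho=(1/\rho_1+1/\rho_2)^{-1}$, which for $\rho_1=1$, $\rho_2=\tfrac{2-\alpha}{\alpha}$ gives exactly $\tfrac{2-\alpha}{2}$), plus the fact that $P_Q$ is $1$-SQNE. You instead prove everything from scratch, and your computations check out: the cutter inequality does yield $\langle x-z,T(x)-x\rangle\le-\|T(x)-x\|^2$ and hence the first claim; the fixed-point identification via the strict deficit $\tfrac{2-\alpha}{\alpha}\|U(x)-x\|^2$ is the standard argument; and in the last step the required inequality $(2-\alpha)\langle u-x,w-u\rangle\le\tfrac{(2-\alpha)^2}{2\alpha}\|u-x\|^2+\tfrac{\alpha}{2}\|w-u\|^2$ is exactly what Cauchy--Schwarz plus Young with $\lambda=(2-\alpha)/\alpha$ delivers (the product of the two coefficients is $(2-\alpha)^2/4$, so AM--GM closes it). What your route buys is a self-contained, elementary verification that also makes visible why $(2-\alpha)/2$ is the right constant; what it loses is the generality of the cited composition theorem, which handles arbitrary pairs $(\rho_1,\rho_2)$ by essentially the same calibration. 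One cosmetic remark: the inequalities $\tfrac{2-\alpha}{2}<\tfrac{2-\alpha}{\alpha}$ and $\tfrac{2-\alpha}{2}<1$ you mention are not really used as inequalities --- the reduction to the scalar estimate is an exact rearrangement, and those facts only confirm that the two resulting coefficients are positive.
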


\begin{proof}
	Combine,  for example,  \cite[Theorem 2.1.39]{Ceg12} and \cite[Corollary 2.1.47]{Ceg12} with the fact that $P_Q$ is $1$-SQNE.
\end{proof}
	
\begin{lemma} \label{lem:basic}
  Assume that $C_i=\fix T_i$ for given cutter operators $T_i\colon\mathcal H \to \mathcal H$, $i\in I$. Moreover, let  $\alpha\in (0,2)$,  let $\rho_i \colon \mathcal H \to (0,\infty)$, $i \in I$, and let $\lambda_i \in (0,1)$, $i \in I$, satisfy $\sum_{i\in I}\lambda_i = 1$. Define the operators $U_i(x) := x+\alpha\beta_i(x)(T_i(x)- x)$, $i\in I$, and the averaged operator $V(x):=\sum_{i\in I}\lambda_iU_i(x)$, $x \in \mathcal H$, where
  \begin{equation}\label{lem:basic:beta}
    \beta_i(x):=
    \begin{cases}\displaystyle
      \frac{\rho_i(x)+\|T_i(x)-x\|}{\|T_i(x)-x\|}, & \mbox{if } T_i(x) \neq x \\
      0, & \mbox{otherwise}.
    \end{cases}
  \end{equation}

  Assume that $C\cap Q \neq \emptyset$. Then $\fix V = C$ and $\fix P_Q V = C \cap Q$. Moreover, assume that there are $z\in C$ and $r>0$ such that $B(z,r)\subseteq C$. Then for all $x \notin C$ with $\rho(x):=\max_{i\in I}\rho_i(x)\leq r$, we have
  \begin{align}
    \|V(x)-z\|^2
    & \leq \|x - z\|^2 - \frac{2-\alpha}{\alpha} \sum_{i\in I} \lambda_i \|U_{i}(x) - x\|^2  \label{lem:basic:ineq1a}\\
		& \leq \|x - z\|^2 - \frac{2-\alpha}{\alpha}\|V(x) - x\|^2. \label{lem:basic:ineq1b}
  \end{align}
  If, in addition, $z \in Q$ (hence $z \in \interior(C) \cap Q$), then
	\begin{equation}\label{lem:basic:ineq2}
		\|P_Q(V(x)) - z\|^2 \leq \|x - z\|^2 - \frac{2-\alpha}{2}\|P_Q(V(x)) -
    x\|^2.
	\end{equation}
  Finally, when $x \in C$, then inequalities \eqref{lem:basic:ineq1a}--\eqref{lem:basic:ineq2} hold without any restrictions imposed on $\rho(x)$.
\end{lemma}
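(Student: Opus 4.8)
The plan is to route all the metric estimates through the ``extended'' operators $\hat T_i(x):=x+\beta_i(x)\bigl(T_i(x)-x\bigr)$, for which $U_i(x)=x+\alpha\bigl(\hat T_i(x)-x\bigr)$ and, since $\sum_{i\in I}\lambda_i=1$, also $V(x)=x+\alpha\bigl(\hat T(x)-x\bigr)$ with $\hat T(x):=\sum_{i\in I}\lambda_i\hat T_i(x)$; the whole argument then reduces to showing that, under the erosion hypothesis, each $\hat T_i$ (hence $\hat T$) obeys a cutter-type inequality at $x$ relative to $z$, and invoking Lemma \ref{lem:ProjRelCutter}. For the fixed point sets: $C\subseteq\fix V$ is clear because $x\in C$ forces $T_i(x)=x$, hence $\beta_i(x)=0$ and $V(x)=x$; conversely $V(x)=x$ means $\sum_i\lambda_i\beta_i(x)(T_i(x)-x)=0$, and pairing this with $z-x$ for an arbitrary $z\in C$ (nonempty, since $C\cap Q\neq\emptyset$) while using the cutter inequality $\langle T_i(x)-x,z-x\rangle\ge\|T_i(x)-x\|^2$ forces each term of a sum of nonnegative numbers to vanish, so (as $\lambda_i>0$ and $\beta_i(x)>1$ whenever $T_i(x)\neq x$) $T_i(x)=x$ for all $i$, i.e.\ $x\in C$. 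The same computation, now started from the projection inequality $\langle V(x)-x,z-x\rangle\le0$, which holds for $z\in Q\cap C$ when $x=P_Q(V(x))$ and which again forces $V(x)=x$, yields $\fix P_QV=Q\cap\fix V=Q\cap C$.

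Next fix $x\notin C$ with $\rho(x)\le r$ and $z$ with $B(z,r)\subseteq C$. For an index $i$ with $T_i(x)\neq x$ we have $\rho_i(x)\le\rho(x)\le r$, so $B(z,\rho_i(x))\subseteq C\subseteq C_i=\fix T_i$; minimizing the left-hand side of the cutter inequality $\langle T_i(x)-x,w-x\rangle\ge\|T_i(x)-x\|^2$ over $w\in B(z,\rho_i(x))$ (the extreme point being $z-\rho_i(x)\tfrac{T_i(x)-x}{\|T_i(x)-x\|}$) gives $\langle T_i(x)-x,z-x\rangle\ge\|T_i(x)-x\|^2+\rho_i(x)\|T_i(x)-x\|$. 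The crucial — and essentially only delicate — step is that \eqref{lem:basic:beta} is tuned so that $\rho_i(x)=(\beta_i(x)-1)\|T_i(x)-x\|$, which converts the last inequality (after multiplying by $\beta_i(x)$) into $\langle \hat T_i(x)-x,z-x\rangle\ge\|\hat T_i(x)-x\|^2$, that is, $\langle z-\hat T_i(x),x-\hat T_i(x)\rangle\le0$ (trivially true also when $T_i(x)=x$). Expanding $\|U_i(x)-z\|^2=\|(x-z)+\alpha(\hat T_i(x)-x)\|^2$ and using this inequality (equivalently, applying Lemma \ref{lem:ProjRelCutter} to the metric projection onto the half-space $\{y:\langle x-\hat T_i(x),y-\hat T_i(x)\rangle\le0\}$, whose fixed point set contains $z$) gives $\|U_i(x)-z\|^2\le\|x-z\|^2-\tfrac{2-\alpha}{\alpha}\|U_i(x)-x\|^2$ for each $i$. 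Averaging with the weights $\lambda_i$ and using convexity of $\|\cdot\|^2$ (Jensen) — to bound $\|V(x)-z\|^2$ above by $\sum_i\lambda_i\|U_i(x)-z\|^2$ and $\|V(x)-x\|^2$ above by $\sum_i\lambda_i\|U_i(x)-x\|^2$ — yields \eqref{lem:basic:ineq1a} and then \eqref{lem:basic:ineq1b}.

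For \eqref{lem:basic:ineq2}, assume in addition $z\in Q$. Averaging $\langle \hat T_i(x)-x,z-x\rangle\ge\|\hat T_i(x)-x\|^2$ over $i$ and applying Jensen once more gives $\langle \hat T(x)-x,z-x\rangle\ge\|\hat T(x)-x\|^2$, i.e.\ $\langle z-\hat T(x),x-\hat T(x)\rangle\le0$; writing $\hat T(x)=P_H(x)$ for the half-space $H:=\{y:\langle x-\hat T(x),y-\hat T(x)\rangle\le0\}$, one checks $z\in H$, so the metric projection $P_H$ is a cutter with $z\in Q\cap\fix P_H$, and by Lemma \ref{lem:ProjRelCutter} the operator $y\mapsto P_Q\bigl(y+\alpha(P_H(y)-y)\bigr)$ is $\tfrac{2-\alpha}{2}$-SQNE; reading its defining inequality at $y=x$ — where $x+\alpha(P_H(x)-x)=x+\alpha(\hat T(x)-x)=V(x)$ — is precisely \eqref{lem:basic:ineq2}. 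Finally, if $x\in C$ then $\beta_i(x)=0$ for all $i$, so $V(x)=x$: then \eqref{lem:basic:ineq1a}--\eqref{lem:basic:ineq1b} hold with equality, and \eqref{lem:basic:ineq2} reduces to the $1$-SQNE inequality of the metric projection $P_Q$ at the point $x$ (using $\tfrac{2-\alpha}{2}<1$); since $\rho(x)$ was never used in this case, no bound on it is required.
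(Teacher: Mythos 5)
Your proof is correct, and while it follows the same overall skeleton as the paper's (a per-operator SQNE-type inequality for $U_i$, averaging via convexity of $\|\cdot\|^2$, and then Lemma \ref{lem:ProjRelCutter} to absorb $P_Q$), it differs in two substantive ways. First, you prove the key ``deep cut'' inequality $\langle \hat T_i(x)-x,\,z-x\rangle\geq\|\hat T_i(x)-x\|^2$ from scratch, by minimizing the cutter inequality over the ball $B(z,\rho_i(x))\subseteq\fix T_i$ and exploiting the identity $\rho_i(x)=(\beta_i(x)-1)\|T_i(x)-x\|$; the paper instead imports exactly this fact (in the form $\|U_i(x)-z\|^2\leq\|x-z\|^2-\tfrac{2-\alpha}{\alpha}\|U_i(x)-x\|^2$) from \cite[Lemma 3.1]{KRZ20}, and likewise cites \cite[Lemma 3.2]{KRZ20} for $\fix V=C$ and $\fix P_QV=C\cap Q$, which you also derive directly. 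Your inline arguments are correct, so this only makes the proof more self-contained. Second, and more interestingly, for \eqref{lem:basic:ineq2} you localize: you observe that $V(x)$ coincides at the single point $x$ with the $\alpha$-relaxation of the metric projection onto a half-space $H$ containing $z$, and read off the $\tfrac{2-\alpha}{2}$-SQNE inequality of $P_Q\circ(\mathrm{relaxed}\ P_H)$ at $y=x$. The paper instead constructs a globally defined piecewise operator $\tilde V$ (equal to $V$ where $\rho(x)\leq r$ and $x\notin C$, and to relaxed projections onto $C$ or the erosion $C^r$ elsewhere) with $\fix\tilde V=C^r$, and applies Lemma \ref{lem:ProjRelCutter} to $P_Q\tilde V$. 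Your pointwise half-space trick avoids the case analysis needed to define $\tilde V$ and is arguably cleaner for proving the single inequality; the paper's construction packages the estimate into one genuine SQNE operator with identified fixed point set $C^r\cap Q$, which is structurally more informative. Your treatment of the degenerate cases ($T_i(x)=x$ for some $i$, and $x\in C$) is also handled correctly.
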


\begin{proof}
Both of the equalities, $\fix V = C$ and $\fix P_QV = C \cap Q$, follow from \cite[Lemma 3.2]{KRZ20}. The proof of \eqref{lem:basic:ineq1a} is similar to the proof of \cite[Proposition 4.5]{CZ14}. Indeed, let $x \notin C$ be such that $\rho(x) \leq r$ and assume that $U_i(x) \neq x$ for some $i \in I$. Then, by \cite[Lemma 3.1]{KRZ20}, we have
\begin{equation}\label{pr:lem:basic:Ui}
  \|U_{i}(x)-z\|^2\leq\|x-z\|^2-\frac{2-\alpha}{\alpha}\|x-U_{i}(x)\|^2.
\end{equation}
Using the properties of the inner product, we see that
\begin{align} \nonumber
  0 &\geq \|U_i(x)-z\|^2 - \|x-z\|^2 + \frac{2-\alpha}{\alpha}\|x-U_i(x)\|^2 \\ \nonumber
  & = \|(x-z) + (U_i(x)-x)\|^2 - \|x-z\|^2 + \frac{2-\alpha}{\alpha}\|x-U_i(x)\|^2 \\ \nonumber
  & = \|x-z\|^2 + \|U_i(x)-x\|^2 + 2\langle x-z, U_i(x)-x\rangle
  - \|x-z\|^2 + \frac{2-\alpha}{\alpha}\|x-U_i(x)\|^2 \\
  & = 2\langle x-z, U_i(x)-x\rangle + \frac 2 \alpha \|U_i(x)-x\|^2,
\end{align}
that is,
\begin{equation}\label{pr:lem:basic:Ui2}
  \alpha \langle z-x, U_i(x)-x\rangle \geq \|U_i(x)-x\|^2
\end{equation}
Note that inequality \eqref{pr:lem:basic:Ui2} holds for all $i \in I$, even when $U_i(x) = x$. Consequently,
\begin{align} \label{pr:lem:basic:V} \nonumber
	\|V(x)-z\|^2 &= \left\|(x-z)+\sum_{i\in I}\lambda_i(U_{i}(x)-x)\right\|^2\\ \nonumber
	& = \|x-z\|^2 + \left\|\sum_{i\in I}\lambda_i(U_{i}(x)-x)\right\|^2
  + 2\sum_{i\in }\lambda_i\langle x-z,U_i (x)-x\rangle\\ \nonumber
	& \leq \|x-z\|^2 + \sum_{i\in I}\lambda_i\|U_i(x)-x\|^2
  - \sum_{i\in I}\frac{2\lambda_i}{\alpha}\|U_i(x)-x\|^2\\ \nonumber
	& = \|x-z\|^2-\frac{2-\alpha}{\alpha}\sum_{i\in I}\lambda_i\|U_i(x)-x\|^2\\ \nonumber
	& \leq \|x-z\|^2-\frac{2-\alpha}{\alpha}\left\|\sum_{i\in I}\lambda_iU_{i}(x)-x\right\|^2\\
	& = \|x-z\|^2-\frac{2-\alpha}{\alpha}\|x-V(x)\|^2,
\end{align}
which proves \eqref{lem:basic:ineq1a}--\eqref{lem:basic:ineq1b}, as asserted.
		
We now show \eqref{lem:basic:ineq2}. Observe that, by assumption, the $r$-erosion $C^r = \{x \colon B(x,r)\subset C\}$ is nonempty, closed and convex. Thus the projection $P_{C^r}$ is well defined. Moreover, the auxiliary operator
\begin{equation}
  \tilde{V}(x) :=
  \begin{cases}
    V(x), & \text{if } x\in\mathcal H\setminus C \text{ and } \rho(x) \leq r,\\
    x+\alpha(P_{C}(x)-x), &\text{if } x\in \mathcal H \setminus C \text{ and } \rho(x) > r,\\
	  x+\alpha(P_{C^r}(x)-x), &\text{if } x\in C\setminus C^r,\\
	  x,&\text{if } x\in C^r.
	\end{cases}
\end{equation}
is $[\frac{2-\alpha}{\alpha}]$-SQNE and it satisfies $\fix\tilde{V}=C^r$. Indeed, by Lemma \ref{lem:ProjRelCutter}, applied separately to $P_C$ and $P_{C^r}$, we see that their $\alpha$-relaxations are $[\frac{2-\alpha}{\alpha}]$-SQNE. This, when combined with \eqref{pr:lem:basic:V}, proves that $\tilde V$ is $[\frac{2-\alpha}{\alpha}]$-SQNE. Again, by applying Lemma \ref{lem:ProjRelCutter}, this time to $\tilde{V}$ and $P_Q$, we see that $P_Q\tilde{V}$ is $[\frac{2-\alpha}{2}]$-SQNE. By the definition of $\tilde V$, we may now deduce inequality \eqref{lem:basic:ineq2}.
\end{proof}
	
\subsection{Regular sets}
Let $J$ be a nonempty set of indices, possibly uncountable. For example, $J$ can be defined as $\{1,2,\ldots\}$, $[0,1]$ or a combination of both. Moreover, let $\mathcal C := \{C_j\colon j\in J\}$ be a family of closed and convex sets $C_j \subset \mathcal H$ with nonempty intersection $C:=\bigcap_{j\in J}C_j$.  The following definition can be found in \cite{BB96}.
	
\begin{definition}\label{def:BR:set}
We say that the family $\mathcal C$ is
\begin{enumerate}[(i)]
  \item \emph{regular} over $S \subset \mathcal H$ if for every sequence $\{x_k\}_{k=0}^\infty \subset S$,
	\begin{equation}\label{eq:def:BR:set}
			\lim_{k\rightarrow\infty} \sup_{j\in J} d(x_k, C_j) = 0 \quad \Longrightarrow \quad
			\lim_{k\rightarrow\infty} d(x_k, C)=0;
	\end{equation}	
	\item \emph{linearly regular} over $S \subset \mathcal H$ if there is $\kappa > 0$ such that for every $x\in S$,
	\begin{equation}\label{eq:def:BLR:set}
			d(x, C) \leq \kappa \sup_{j\in J} d(x, C_j).
	\end{equation}
\end{enumerate}
If any of the above regularity conditions holds for every subset $S \subset \mathcal H$, then we simply omit the phrase ``over $S$". If the same condition holds when restricted to bounded subsets $S\subset\mathcal H$, then we precede the corresponding term with the adverb \textit{boundedly}.
\end{definition}

\begin{lemma}\label{lem:BLR}
  Assume that for some $j_0\in J$, we have $C_{j_0} \cap \interior\left(\bigcap_{j\in J\setminus\{j_0\}}C_j\right) \neq\emptyset.$ Then, for each bounded set $S\subset\mathcal{H}$, there is a number $\kappa_S\geq 1$ such that the inequality
	\begin{equation}\label{lem:BLR:ineq}
		d\left(x,\bigcap_{i\in I}C_i\right)\leq\kappa_S\sup_{i\in I}d(x,C_i)<\infty
	\end{equation}
  holds for all $x\in S$ and for all nonempty (possibly infinite) $I \subset J$. Moreover, for any ball $B(z_0,r)\subset\bigcap_{j\in J\setminus\{j_0\}}C_j$, where $z_0\in C_{j_0}$, one can use
	\begin{equation}\label{lem:BLR:kappa}
		\kappa_S := 1+\frac{2\sup_{x\in S}\|x-z_0\|}{r}<\infty,
	\end{equation}
	which, in particular, is independent of $I$. As a consequence, every subfamily $\{C_i \colon i\in I\}$ is boundedly linearly regular.
\end{lemma}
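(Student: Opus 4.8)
The plan is to fix a bounded set $S$, set $R := \sup_{x\in S}\|x-z_0\|$ (finite, since $S$ is bounded), and prove, for every $x\in S$ and every nonempty $I\subset J$, the single estimate $d\big(x,\bigcap_{i\in I}C_i\big)\le\kappa_S\, b$ with $\kappa_S := 1+\tfrac{2R}{r}$ and $b := \sup_{i\in I}d(x,C_i)$, by exhibiting an explicit point $y\in\bigcap_{i\in I}C_i$ at distance at most $\kappa_S b$ from $x$. First I would dispose of the bookkeeping: since $z_0\in C_{j_0}$ and $z_0\in B(z_0,r)\subseteq C_j$ for every $j\ne j_0$, the point $z_0$ lies in $C_i$ for all $i\in J$; hence each $\bigcap_{i\in I}C_i$ is nonempty, $d(x,C_i)\le\|x-z_0\|\le R$ for all $i$, so $b\le R$ and in particular $\kappa_S b\le\kappa_S R<\infty$. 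If $b=0$ the inequality is trivial (then $x$ lies in every $C_i$, $i\in I$), so I may assume $b>0$.

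For the construction, the one genuinely delicate point is that $C_{j_0}$ is the set not assumed to contain a ball, so I would first absorb it by a preliminary projection: put $x' := P_{C_{j_0}}(x)$ if $j_0\in I$, and $x' := x$ otherwise. In either case $\|x-x'\|\le b$ (it equals $d(x,C_{j_0})\le b$ when $j_0\in I$, and $0$ otherwise) and $\|x'-z_0\|\le\|x-z_0\|\le R$ (for $x'=P_{C_{j_0}}(x)$, use that $P_{C_{j_0}}$ is nonexpansive and fixes $z_0$), and by the triangle inequality $d(x',C_i)\le d(x,C_i)+\|x-x'\|\le 2b$ for every $i\in I\setminus\{j_0\}$. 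Now push $x'$ toward the interior point $z_0$ by setting $y := (1-t)x'+t z_0$ with $t := \tfrac{2b}{2b+r}\in(0,1)$.

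It then remains to verify $y\in\bigcap_{i\in I}C_i$ and to estimate $\|x-y\|$. When $j_0\in I$ we have $y\in C_{j_0}$ by convexity, since $x'=P_{C_{j_0}}(x)\in C_{j_0}$ and $z_0\in C_{j_0}$. For $i\in I\setminus\{j_0\}$, let $c_i := P_{C_i}(x')\in C_i$; as $C_i$ is convex and contains both $c_i$ and $B(z_0,r)$, it contains $\conv(\{c_i\}\cup B(z_0,r))$ and hence the ball $B\big((1-t)c_i+t z_0,\,tr\big)=\{(1-t)c_i+tw : w\in B(z_0,r)\}$, while $\big\|y-\big((1-t)c_i+t z_0\big)\big\|=(1-t)\|x'-c_i\|=(1-t)\,d(x',C_i)\le(1-t)\cdot 2b=tr$ by the choice of $t$; thus $y\in C_i$. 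Finally, $\|x-y\|\le\|x-x'\|+t\|z_0-x'\|\le b+\tfrac{2b}{2b+r}R\le b\big(1+\tfrac{2R}{r}\big)=\kappa_S b$, which is the asserted inequality, uniformly in $I$. The closing statement that an arbitrary subfamily $\{C_i:i\in I\}$ is boundedly linearly regular then follows immediately from Definition \ref{def:BR:set}(ii), using the same $\kappa_S$ on each bounded set.

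The step I expect to be the main obstacle is handling $C_{j_0}$: because it alone is not known to contain a ball, one cannot simply slide $x$ along the segment toward $z_0$ and stay inside $C_{j_0}$, and the preliminary projection onto $C_{j_0}$ is what repairs this — at the price of inflating the defect from $b$ to $2b$ for the remaining sets, which is exactly where the factor $2$ in $\kappa_S$ originates. The only other point needing care, in order to reach the constant $1+\tfrac{2R}{r}$ rather than something like $1+\tfrac{4R}{r}$, is to bound $\|x'-z_0\|$ by $R$ via nonexpansiveness of $P_{C_{j_0}}$, instead of using the cruder $\|x'-z_0\|\le\|x'-x\|+\|x-z_0\|\le b+R$.
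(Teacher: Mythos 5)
Your proposal is correct and follows essentially the same route as the paper: a preliminary projection onto $C_{j_0}$ (the one set not containing the ball), followed by a convex-combination push toward $z_0$ with coefficient $\tfrac{2b}{2b+r}$, membership in each $C_i$ verified by writing the resulting point as a convex combination of $P_{C_i}$ of the shifted point and a point of $B(z_0,r)$, and the same bookkeeping yielding $\kappa_S=1+\tfrac{2R}{r}$. The only cosmetic difference is that you skip the projection entirely when $j_0\notin I$, whereas the paper still projects onto some $C_j$; both variants work.
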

	
\begin{proof}
Let $I\subset J$ be nonempty. If $j_0\in I$, then set $j=j_0$, otherwise choose any $j\in J$. Define $C_I:=\bigcap_{i\in I} C_i$
and observe that for each point $x\in\mathcal{H}$, we have $\phi_I(x):=\sup_{i\in I}d(x,C_i)\leq d(x,C_I)<\infty.$ Moreover, since $S$ is bounded, we also have $\sup_{x\in S}\phi_I(x)\leq\sup_{x\in S}d(x,C_i)\leq\sup_{x\in S}\|x-z_0\|=:R<\infty$, where $B(z_0,r)$ is chosen as above.
		
\underline{\emph{Step 1.}} We show that for any $y\in\mathcal{H}$ such that $\phi_{I\setminus\{j\}}(y)\leq\varepsilon$, we have
\begin{equation}\label{pr:BLR:w}
	w:=\frac{\varepsilon}{\varepsilon+r}z_0+\frac{r}{\varepsilon+r}y\ \in\bigcap_{i\in I\setminus\{j\}}C_i.
\end{equation}
Indeed, for any $i\in I\setminus\{j\}$, we have
\begin{equation}\label{pr:BLR:convexw}
  w=\frac{\varepsilon}{\varepsilon+r}\underbrace{\left(z_0+\frac{r}{\varepsilon}(y-P_{C_i}(y))\right)}_{:=z}+\frac{r}{\varepsilon+r}P_{C_i}(y).
\end{equation}
Observe that
\begin{equation}
	\|z-z_0\|=\frac{r}{\varepsilon}d(y,C_i)\leq\frac{r}{\varepsilon}\phi_{I\setminus\{j\}}(y)\leq r
\end{equation}
and hence $z\in B(z_0,r)$. In particular, $z\in C_i$, which is convex, and thus using \eqref{pr:BLR:convexw}, we conclude that $w$ is a convex combination of two points in $C_i$, thus $w \in C_i$. Since the index $i$ has been arbitrarily chosen, this finishes the proof of Step 1.
		
\underline{\emph{Step 2.}} Next we show that inequality \eqref{lem:BLR:ineq} holds with $\kappa_S$ defined by \eqref{lem:BLR:kappa}. Indeed, let $x\in S$ and define $y:=P_{C_j}(x)$. Setting $\varepsilon=2\phi_I(x)$, for all $i\in I\setminus\{j\}$, we obtain
\begin{equation}
  d(y,C_i) = \|y-P_{C_i}(y)\| \leq \|y-P_{C_i}(x)\| \leq \|y-x\|+\|x-P_{C_i}(x)\| \leq \frac{\varepsilon}{2} + \frac{\varepsilon}{2} = \varepsilon.
\end{equation}
Consequently, $\phi_{I\setminus\{j\}}(y) \leq \varepsilon$ and by applying Step 1 to the point $w$ defined by \eqref{pr:BLR:w} with the above point $y$, we obtain that $w\in\bigcap_{i\in I\setminus\{j\}}C_i$. Since both $z_0$ and $w$ are in $C_j$, which is convex, we get that $w\in C_j$. Consequently, $w\in C_I$ and thus
\begin{align}
		d(x,C_I) & \leq \|x-w\|\leq\|x-y\|+\|y-w\|\nonumber\\
		& = d(x,C_j)+\frac{2\phi_I(x)}{2\phi_I(x)+r}\|P_{C_j}(x)-z_0\|\nonumber\\
		& \leq \phi_I(x)+\frac{2\phi_I(x)}{r}\|x-z_0\|\nonumber\\
		& \leq \phi_I(x)\left(1+\frac{2R}{r}\right).
\end{align}
This completes the proof of Step 2 and of the lemma itself.
\end{proof}

\begin{remark} \label{rem:interiorAndBLR}
  The proof of Lemma \ref{lem:BLR} is a slight modification of the proof given in \cite[Lemma 5]{GPR67}, where the assumption $C_{j_0} \cap \interior\left(\bigcap_{j\in J\setminus\{j_0\}}C_j\right) \neq\emptyset$ appeared for the first time. This assumption can also be found, for example, in \cite[Corollary 3.15]{BB96} with a finite number of sets, and in \cite[Theorem 3.1]{ZNL18}, with an infinite number of sets. For other examples of regular families of sets, we refer the reader to \cite{BB96}.
\end{remark}

\subsection{Regular operators}
 Following \cite{BNPH15} and \cite{CRZ18}, we now  present the definition of regular operators.

\begin{definition}\label{def:BR:oper}
  Let $T\colon\mathcal H\to\mathcal H$ be an operator with a fixed point, that is, $\fix T \neq \emptyset$ and let $S\subset\mathcal H$ be nonempty. We say that the operator $T$ is
	\begin{enumerate}[(i)]
		\item \textit{weakly regular} over $S$ if for any sequence $\{x_{k}\}_{k=0}^\infty \subset S$ and $x_\infty\in \mathcal H$,
		  \begin{equation}\label{eq:def:DC}
        \left .
			  \begin{array}{l}
          x_{k}\rightharpoonup x_\infty \\ T (x_k)-x_k\rightarrow 0
			  \end{array}
			  \right \}
        \quad\Longrightarrow\quad x_\infty\in \fix T;
	    \end{equation}			
		\item \textit{regular} over $S$ if for any sequence $\{x_{k}\}_{k=0}^\infty \subset S$,
			\begin{equation} \label{eq:def:BR:oper}
			 \lim_{k\rightarrow\infty}\| T(x_k)-x_k\| =0\quad\Longrightarrow\quad \lim_{k\rightarrow\infty}d(x_k,\fix T)=0;
			\end{equation}			
		\item \textit{linearly regular} over $S$ if there is $\delta_S>0$ such that for every $x\in S$,
			\begin{equation} \label{eq:def:BLR:oper}
        \delta_S d(x,\fix T)\leq \|T(x)-x\|.
			\end{equation}
	\end{enumerate}
  If any of the above regularity conditions holds for every subset $S\subset\mathcal H$, then we simply omit the phrase ``over $S$". If the same condition holds when restricted to bounded subsets $S\subset\mathcal H$, then we precede the corresponding term with the adverb \textit{boundedly}. Since there is no need to distinguish between boundedly weakly and weakly regular operators, we call both of them weakly regular.
\end{definition}

 Obviously, the operator $T$ is weakly regular if and only if $T - I$ is demiclosed at zero.  Note here that (iii) implies (ii) which implies (i). For a more extensive overview see, for example, \cite{CRZ18}. Other examples can be found in \cite{BNPH15, BLT15, CET20, KRZ17}.

\subsection{Quasi-Fej\'er monotone sequences}
Following Combettes \cite{Com01}, we recall the following definitions.	
\begin{definition}
Let $C\subset\mathcal H$ be a nonempty, closed and convex set. We say that a sequence $\{x_k\}_{k=0}^\infty$ in $\mathcal{H}$ is:
\begin{enumerate}[(i)]
	\item \emph{Fej\'er monotone} (FM) with respect to $C$ if for all
    $z\in C$ and every integer $k = 0,1,2,\ldots$, we have
    $\|x_{k+1}-z\| \leq \|x_k-z\|$.

  \item \emph{Quasi-Fej\'er monotone of type I} (QF1) with respect to $C$
    if there is a sequence $\{\varepsilon_k\}_{k=0}^\infty \subset (0,\infty)$ satisfying $\sum_{k=0}^\infty \varepsilon_k < \infty$, such that for all $z\in C$ and every integer $k = 0,1,2,\ldots$, we have $\|x_{k+1}-z\|\leq\|x_k-z\|+\varepsilon_k$.
	
	\item \emph{Quasi-Fej\'er monotone of type II} (QF2) with respect to $C$
    if there is a sequence $\{\varepsilon_k\}_{k=0}^\infty \subset (0,\infty)$ satisfying $\sum_{k=0}^\infty \varepsilon_k < \infty$, such that for all $z\in C$ and every integer $k = 0,1,2,\ldots$, we have $\|x_{k+1}-z\|^2\leq\|x_k-z\|^2+\varepsilon_k$.
\end{enumerate}
\end{definition}
		
\begin{theorem}\label{thm:QFM}
  Let $\{x_k\}_{k=0}^\infty\subset\mathcal{H}$ be FM, QF1 or QF2 with respect to a nonempty, closed and convex set $C\subset \mathcal H$. Then the following statements hold:
	\begin{enumerate}[(i)]
		\item The sequence $\{x_k\}_{k=0}^\infty$ is bounded.
		\item The sequence $\{x_k\}_{k=0}^\infty$ converges weakly to some
      point $x_\infty \in C$ if and only if all its weak cluster points lie in $C$.
		\item The sequence $\{x_k\}_{k=0}^\infty$ converges strongly to some
      point $x_\infty\in C$ if and only if $d(x_k,C) \to 0$ as ${k\to\infty}$.
	\end{enumerate}
\end{theorem}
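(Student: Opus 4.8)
The plan is to reduce all three cases to the quasi-Fej\'er type~II situation and then extract one workhorse fact: for every $z\in C$ the scalar sequence $\|x_k-z\|$ converges. First I would observe that an FM sequence is QF1 (use any summable sequence $\varepsilon_k\in(0,\infty)$, since $\|x_{k+1}-z\|\le\|x_k-z\|<\|x_k-z\|+\varepsilon_k$), and that a QF1 sequence is QF2. For the latter note that QF1 alone forces boundedness, because $\|x_k-z\|\le\|x_0-z\|+\sum_{j<k}\varepsilon_j\le\|x_0-z\|+\sum_{j}\varepsilon_j<\infty$; squaring the QF1 inequality then yields $\|x_{k+1}-z\|^2\le\|x_k-z\|^2+2\varepsilon_k\|x_k-z\|+\varepsilon_k^2$, and since $\{\|x_k-z\|\}$ is bounded and $\varepsilon_k\to0$, the error term is dominated by a summable sequence. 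Hence it suffices to treat a QF2 sequence.

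For a QF2 sequence, fix $z\in C$, put $a_k:=\|x_k-z\|^2$ and $s_k:=\sum_{j\ge k}\varepsilon_j$, so $s_k\to0$. From $a_{k+1}\le a_k+\varepsilon_k$ the sequence $b_k:=a_k+s_k$ is non-increasing and bounded below by $0$, hence convergent; since $s_k\to0$, $a_k$ converges, so $\lim_k\|x_k-z\|$ exists for every $z\in C$. Statement (i) is then immediate: $C\neq\emptyset$, so fixing any $z\in C$ shows $\{\|x_k-z\|\}$ is bounded, hence $\{x_k\}$ is bounded.

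For (ii) the forward implication is trivial. For the converse, boundedness gives at least one weak cluster point, and all such points lie in $C$ by hypothesis; I would prove uniqueness of the weak cluster point via the identity $\|x_k-u\|^2=\|x_k-v\|^2+2\langle x_k-v,\,v-u\rangle+\|v-u\|^2$. If $x_{k_n}\rightharpoonup u$ and $x_{m_n}\rightharpoonup v$ with $u,v\in C$, letting $k\to\infty$ along $\{k_n\}$ and along $\{m_n\}$ and using that $\lim_k\|x_k-u\|$ and $\lim_k\|x_k-v\|$ exist yields, respectively, $\lim_k\|x_k-u\|^2=\lim_k\|x_k-v\|^2-\|u-v\|^2$ and $\lim_k\|x_k-u\|^2=\lim_k\|x_k-v\|^2+\|u-v\|^2$, forcing $\|u-v\|^2=0$, i.e. $u=v$. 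A bounded sequence in a Hilbert space with a unique weak cluster point converges weakly to it (otherwise some coordinate functional would be bounded away from its value along a subsequence, which still has a weakly convergent sub-subsequence, contradicting uniqueness), completing (ii).

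For (iii) the forward implication uses $d(x_k,C)\le\|x_k-x_\infty\|\to0$. For the converse, assume $d(x_k,C)\to0$ and set $p_k:=P_C(x_k)\in C$; iterating the QF2 inequality from $k$ to $m>k$ with $z=p_k$ gives $\|x_m-p_k\|^2\le\|x_k-p_k\|^2+s_k=d(x_k,C)^2+s_k$, hence $\|x_m-x_k\|\le\sqrt{d(x_k,C)^2+s_k}+d(x_k,C)$, which tends to $0$ as $k\to\infty$ uniformly in $m>k$. Thus $\{x_k\}$ is Cauchy, $x_k\to x_\infty$ strongly, and $d(x_\infty,C)=\lim_k d(x_k,C)=0$ with $C$ closed gives $x_\infty\in C$. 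I do not expect a genuine obstacle, as this is classical (cf. Combettes for the quasi-Fej\'er cases); the only points needing care are the bookkeeping in the QF1 $\Rightarrow$ QF2 reduction and a clean one-line justification of ``unique weak cluster point $\Rightarrow$ weak convergence''.
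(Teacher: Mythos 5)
Your proof is essentially correct, but it takes a genuinely different route from the paper: the paper gives no argument at all, simply citing Combettes' quasi-Fej\'erian analysis (Propositions 3.5 and 3.38 and Theorems 3.8 and 3.11 of \cite{Com01}), whereas you supply a self-contained, elementary proof. Your strategy --- establish that $\lim_k\|x_k-z\|$ exists for each fixed $z\in C$, then run the standard Opial-type uniqueness argument for (ii) and a Cauchy-sequence argument via $P_C(x_k)$ for (iii) --- is exactly the classical machinery hidden behind those citations, so what your version buys is transparency and independence from \cite{Com01}, at the cost of length.

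One step deserves a caveat. The reduction ``QF1 $\Rightarrow$ QF2'' is not true under the paper's definitions when $C$ is unbounded: squaring the QF1 inequality gives the error term $2\varepsilon_k\|x_k-z\|+\varepsilon_k^2$, and your dominating summable sequence is built from the bound $\|x_k-z\|\le\|x_0-z\|+\sum_j\varepsilon_j$, which depends on $z$; since QF2 requires a single sequence $\{\varepsilon_k\}$ valid for \emph{all} $z\in C$, no uniform choice exists if $C$ is unbounded. This does not sink the proof, because every place you invoke the squared inequality only needs it for $z$ ranging over a \emph{bounded} subset of $C$: a single point in (i), two points in (ii), and the set $\{P_C(x_k)\}_{k}$ in (iii), which is bounded because $\|P_C(x_k)-z_0\|\le 2\|x_k-z_0\|$ for any fixed $z_0\in C$ and $\{x_k\}$ is bounded. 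On such a set the domination is uniform and your tail $s_k=\sum_{j\ge k}\tilde\varepsilon_j$ is a single sequence tending to $0$, which is precisely what the Cauchy estimate in (iii) requires. You should therefore replace the blanket claim ``QF1 implies QF2'' by the statement that the QF2-type inequality holds with a summable error uniform over bounded subsets of $C$; with that correction the argument is complete.
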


\begin{proof}
  The first part follows from \cite[Proposition 3.38]{Com01}, the second part follows from \cite[Theorem 3.8]{Com01}, and the third part follows from \cite[Theorem 3.11]{Com01} and \cite[Proposition 3.5]{Com01}.
\end{proof}
 The following lemma is a slight modification of \cite[Proposition 4.2]{Com01}.
\begin{lemma}\label{lem:QFM}
Let $\{W_k\}_{k=0}^\infty$ be a sequence of cutters such that $C:=\bigcap_{k=0}^\infty\fix W_k\neq\emptyset$ and let $Q\subset\mathcal{H}$ be closed and convex such that $C\cap Q\neq\emptyset$. Moreover, let $\{\alpha_k\}_{k=0}^\infty\in[\varepsilon,2-\varepsilon]$ be a sequence of relaxations and let $\{e_k\}_{k=0}^\infty \subset \mathcal{H}$ be such that $\sum_{k=0}^\infty\|e_k\|<\infty$. Then the sequence $\{x_k\}_{k=0}^\infty$ defined by
\begin{equation} \label{lem:QFM:xk}
	x_0\in\mathcal H,\quad x_{k+1}:=P_Q\left(x_k+\alpha_k\big(W_k(x_k)-x_k+e_k\big)\right),
\end{equation}
is QF1 with respect to $C\cap Q$, where $\varepsilon_k=\alpha_k\|e_k\|$. Morever,
\begin{equation} \label{lem:QFM:sum}
  \sum_{k=0}^\infty\|W_k(x_k)-x_k\|^2<\infty
  \quad \text{and} \quad
  \sum_{k=0}^\infty\|x_{k+1}-x_k\|^2<\infty.
\end{equation}

\end{lemma}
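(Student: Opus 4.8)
The plan is to regard the iteration as a summably perturbed version of the exact projected--relaxation iteration, and to run the standard strongly quasi-nonexpansive (SQNE) telescoping argument while carrying the perturbation terms along. First, fix $z\in C\cap Q$ and introduce the $\alpha_k$-relaxation $U_k:=\id+\alpha_k(W_k-\id)$ of the cutter $W_k$, so that $U_k(x_k)-x_k=\alpha_k\bigl(W_k(x_k)-x_k\bigr)$ and $x_{k+1}=P_Q\bigl(U_k(x_k)+\alpha_k e_k\bigr)$. Since $C\cap Q\subseteq Q\cap\fix W_k$, Lemma~\ref{lem:ProjRelCutter} applies and tells us that $U_k$ is $\tfrac{2-\alpha_k}{\alpha_k}$-SQNE with $\fix U_k=\fix W_k\ni z$. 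Using that $U_k$ is in particular quasi-nonexpansive and that $P_Q$ is nonexpansive with $P_Qz=z$, one gets
\[
\|x_{k+1}-z\|\le\|U_k(x_k)-z+\alpha_k e_k\|\le\|U_k(x_k)-z\|+\alpha_k\|e_k\|\le\|x_k-z\|+\alpha_k\|e_k\|,
\]
so $\{x_k\}$ is QF1 with respect to $C\cap Q$ with $\varepsilon_k=\alpha_k\|e_k\|$ (one may replace this by $\alpha_k\|e_k\|+2^{-k}$ if strict positivity is insisted upon), and $\sum_k\varepsilon_k\le(2-\varepsilon)\sum_k\|e_k\|<\infty$. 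By Theorem~\ref{thm:QFM}(i), $\{x_k\}$ is then bounded; set $M:=\sup_k\|x_k-z\|<\infty$.

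For the first series, the plan is to feed the SQNE inequality for $U_k$ at $x_k$ through the projection. Using $\|U_k(x_k)-x_k\|^2=\alpha_k^2\|W_k(x_k)-x_k\|^2$, SQNE-ness gives $\|U_k(x_k)-z\|^2\le\|x_k-z\|^2-\alpha_k(2-\alpha_k)\|W_k(x_k)-x_k\|^2$. Expanding $\|x_{k+1}-z\|^2\le\|U_k(x_k)-z+\alpha_k e_k\|^2$ and bounding the cross term by means of $\|U_k(x_k)-z\|\le\|x_k-z\|\le M$ and $\alpha_k\le 2$ yields $\|x_{k+1}-z\|^2\le\|U_k(x_k)-z\|^2+\eta_k$ with $\eta_k:=4M\|e_k\|+4\|e_k\|^2$ and $\sum_k\eta_k<\infty$ (note $\sum_k\|e_k\|<\infty$ forces $\|e_k\|\to 0$, hence $\sum_k\|e_k\|^2<\infty$). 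Since $\alpha_k(2-\alpha_k)\ge\varepsilon(2-\varepsilon)>0$ on $[\varepsilon,2-\varepsilon]$, combining the last two estimates gives
\[
\varepsilon(2-\varepsilon)\,\|W_k(x_k)-x_k\|^2\le\|x_k-z\|^2-\|x_{k+1}-z\|^2+\eta_k,
\]
and summing over $k=0,\dots,N$ the right-hand side telescopes to at most $\|x_0-z\|^2+\sum_k\eta_k<\infty$; letting $N\to\infty$ yields $\sum_k\|W_k(x_k)-x_k\|^2<\infty$.

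For the second series, the point is that $x_k\in Q$ for every $k\ge 1$, so $P_Qx_k=x_k$ and, by nonexpansiveness of $P_Q$, $\|x_{k+1}-x_k\|\le\alpha_k\bigl(\|W_k(x_k)-x_k\|+\|e_k\|\bigr)\le 2\bigl(\|W_k(x_k)-x_k\|+\|e_k\|\bigr)$; squaring, $\|x_{k+1}-x_k\|^2\le 8\|W_k(x_k)-x_k\|^2+8\|e_k\|^2$ for $k\ge 1$. Summing over $k\ge 1$ (both series on the right are already known to be finite) and adding the single finite term $\|x_1-x_0\|^2$ completes the argument.

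I do not expect a genuine obstacle: the whole thing is the textbook SQNE/Fej\'er telescoping carried out with a summable perturbation, and the perturbation is absorbed using nothing beyond the triangle inequality, Cauchy--Schwarz, and nonexpansiveness of $P_Q$. The two places that need a little care are the ordering of the steps --- QF1, and hence boundedness of $\{x_k\}$, must be established \emph{before} the constant $M$ is invoked --- and the fact that $x_0$ is only assumed to lie in $\mathcal H$, not in $Q$, which affects solely the (finite) term $\|x_1-x_0\|^2$ and is therefore immaterial for summability.
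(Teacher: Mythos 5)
Your proof is correct and follows essentially the same route as the paper's: establish QF1 via quasi-nonexpansiveness of the relaxation and nonexpansiveness of $P_Q$, deduce boundedness, telescope the SQNE inequality with the summable perturbation absorbed into an $\ell^1$ error term to get $\sum_k\|W_k(x_k)-x_k\|^2<\infty$, and then bound $\|x_{k+1}-x_k\|$ by the residual plus $\|e_k\|$. The only (harmless) deviations are cosmetic --- you drop the $-\|P_Q(y_k)-y_k\|^2$ term the paper retains from the $1$-SQNE property of $P_Q$, and you treat the $k=0$ term of the second series separately, which is actually slightly more careful than the paper since $x_0$ need not lie in $Q$.
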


\begin{proof}
We first show that $\{x_k\}_{k=0}^\infty$ is QF1 with respect to $C\cap Q$. To this end, let $z \in C \cap Q$. Note that $P_Q$ is nonexpansive. Moreover, using Lemma \ref{lem:ProjRelCutter} applied to $W_k$, we see that its $\alpha_k$ relaxation is $\frac{2-\alpha_k}{\alpha_k}$-SQNE, hence QNE. Thus, by the triangle inequality, we have,
\begin{align} \label{pr:QFM:ineq1}\nonumber
  \|x_{k+1}-z\| &= \|P_Q(x_k+\alpha_k(W_k(x_k)-x_k+e_k))-z\|\\ \nonumber
  & \leq \|x_k+\alpha_k(W_k(x_k)-x_k)-z\|+\alpha_k\|e_k\|\\
  & \leq \|x_k-z\|+\varepsilon_k,
\end{align}
which shows that $\{x_k\}_{k=0}^\infty$ is QF1, as asserted.

Next, we show that the first series in \eqref{lem:QFM:sum} is summable. By inductively applying \eqref{pr:QFM:ineq1}, we see that
\begin{equation}\label{}
  \|x_k-z\| \leq \|x_0-z\|+\sum_{k=0}^\infty \varepsilon_k=:R<\infty.
\end{equation}
Set $y_k:=x_k+\alpha_k(W_k(x_k)-x_k+e_k)$. By using the facts that $P_Q$ is 1-SQNE and that the $\alpha_k$-relaxation of $W_k$ is $\frac{2-\alpha_k}{\alpha_k}$-SQNE, we see that
\begin{align} \nonumber
	\|x_{k+1}-z\|^2 & = \|P_Q(y_k)-z\|^2 \leq \|y_k-z\|^2 - \|P_Q(y_k)-y_k\|^2\\ \nonumber
  & = \|(x_k+\alpha_k(W_k(x_k)-x_k)-z) + \alpha_k e_k\|^2 - \|P_Q(y_k)-y_k\|^2\\ \nonumber
	&\leq \|x_k+\alpha_k(W_k(x_k)-x_k)-z\|^2
    + 2\alpha_k\|e_k\| \cdot \|x_k+\alpha_k(W_k(x_k)-x_k)-z\|\\ \nonumber
  & \quad + \alpha_k^2\|e_k\|^2 \nonumber - \|P_Q(y_k)-y_k\|^2\\ \nonumber
  & \leq \|x_k-z\|^2 -(2-\alpha_k)\alpha_k\|W_k(x_k)-x_k\|^2
    + 2\varepsilon_k \|x_k-z\| \\ \nonumber
  & \quad + \varepsilon_k^2 - \|P_Q(y_k)-y_k\|^2\\
  & \leq \|x_k-z\|^2 -\varepsilon^2\|W_k(x_k)-x_k\|^2 + 2 R \varepsilon_k + \varepsilon_k^2 - \|P_Q(y_k)-y_k\|^2.
\end{align}
Thus
\begin{equation}\label{}
  \varepsilon^2 \sum_{k=0}^\infty \|W_k(x_k)-x_k\|^2 +  \sum_{k=0}^\infty \|P_Q(y_k)-y_k\|^2
  \leq 2 R \sum_{k=0}^{\infty} \varepsilon_k + \sum_{k=0}^{\infty} \varepsilon_k^2 < \infty,
\end{equation}
which confirms the first inequality of \eqref{lem:QFM:sum}. Furthermore, we have
\begin{align} \nonumber
	\|x_{k+1}-x_k\|^2 &= \|P_Q(y_k)-x_k\|^2 \leq \|y_k-x_k\|^2 = \|x_k+\alpha_k(W_k(x_k)-x_k+e_k)-x_k\|^2\\ \nonumber
	& \leq \alpha_k^2(\|W_k(x_k)-x_k\|^2+\|e_k\|^2+2\|W_k(x_k)-x_k\|\cdot\|e_k\|)\\
	& \leq (2-\varepsilon)^2(\|W_k(x_k)-x_k\|^2+\|e_k\|^2 + R'\|e_k\|),
\end{align}
where $R':=\sup_{k}\|W_k(x^k)-x^k\| < \infty$. This implies the second inequality of \eqref{lem:QFM:sum}.
\end{proof}

\section{Finite Convergence} \label{sec:finiteConv}

In this section we proclaim and establish our main result regarding finite convergence.

\begin{theorem}\label{thm:main}
Let $\{x_k\}_{k=0}^\infty$ be a sequence defined by \eqref{int:xk}--\eqref{int:betak}. Assume that
\begin{enumerate}[(i)]
  \item $\interior(C)\cap Q \neq \emptyset$.
  \item $r_k \to 0$ monotonically, but with a rate slower than any geometric sequence; see \eqref{int:STAG}.
  \item For each bounded subset $S \subset \mathcal H$, there are $\delta, \Delta >0$ s.t. $\delta \leq \varphi_i(x) \leq \Delta$ for all $x \in S$.
  \item There is $\varepsilon > 0$ such that $\lambda_{i,k} \in [\varepsilon, 1]$ and $\alpha_k \in [\varepsilon, 2-\varepsilon]$ for all $i, k$.
  \item $\{I_k\}_{k=0}^\infty$ is $s$-intermittent for some integer $s \geq 1$, that is,  $I=I_k\cup\ldots\cup I_{k+s}$ for all $k$.
  \item $T_i$ are boundedly linearly regular, $i\in I$.
\end{enumerate}
If  the sequence $\{x_k\}_{k=0}^\infty$ is bounded (see Example \ref{ex:phi}),  then $x_k\in C\cap Q$ for some $k$.
\end{theorem}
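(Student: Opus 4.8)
The plan is to argue by contradiction, assuming $x_k \notin C \cap Q$ for every $k$. Since iteration \eqref{int:xk} ends with the projection $P_Q$ (and $x_0 \in Q$), we have $x_k \in Q$ for all $k$, so the assumption reads $d(x_k, C) > 0$ for all $k$. By (i) fix $z \in \interior(C) \cap Q$ and $r > 0$ with $B(z,r) \subset C$, and let $S \subset \mathcal H$ be bounded with $\{x_k\}_{k\ge 0} \cup \{z\} \subset S$. By (iii) there are $0 < \delta \le \Delta$ with $\delta \le \varphi_i(x) \le \Delta$ on $S$; by (vi) there is $\delta_S > 0$ with $\delta_S\, d(x,C_i) \le \|T_i(x)-x\|$ on $S$; and, since $\interior(C) \ne \emptyset$, Lemma \ref{lem:BLR} provides $\kappa_S \ge 1$ with $d(x,C) \le \kappa_S \max_{i\in I} d(x,C_i)$ for $x \in S$, with the same constant for every subfamily $\{C_i : i \in I_k\}$.

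First I would record the Fej\'erian output of Lemma \ref{lem:basic}. Write $U_{i,k}(x) := x + \alpha_k\beta_{i,k}(x)(T_i(x)-x)$ and $V_k := \sum_{i\in I_k}\lambda_{i,k}U_{i,k}$, so that $x_{k+1} = P_Q(V_k(x_k))$; the $\beta_{i,k}$ of \eqref{int:betak} are exactly the ones in Lemma \ref{lem:basic} with $\rho_{i,k}(\cdot)=r_k/\varphi_i(\cdot)$, and $\max_{i\in I_k} r_k/\varphi_i(x) \le r_k/\delta$ on $S$. Since $r_k \downarrow 0$, pick $k_0$ with $r_k/\delta \le r$ for $k \ge k_0$. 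For such $k$, Lemma \ref{lem:basic} applied to the subfamily $\{C_i : i\in I_k\}$ (its intersection contains $B(z,r)$ and meets $Q$ at $z$), together with the $1$-SQNE property of $P_Q$, gives
\begin{equation*}
  \|x_{k+1}-z\|^2 \le \|x_k-z\|^2 - \frac{2-\alpha_k}{\alpha_k}\sum_{i\in I_k}\lambda_{i,k}\|U_{i,k}(x_k)-x_k\|^2 ,
\end{equation*}
the case $x_k \in \bigcap_{i\in I_k}C_i$ being trivial since then $V_k(x_k)=x_k$ and $x_{k+1}=x_k$ (recall $x_k\in Q$). Hence $\{\|x_k-z\|\}$ is eventually nonincreasing and convergent, and $\sum_k \sum_{i\in I_k}\|U_{i,k}(x_k)-x_k\|^2 < \infty$; in particular $\|x_{k+1}-x_k\| \le \max_{i\in I_k}\|U_{i,k}(x_k)-x_k\| \to 0$. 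Combining this with $s$-intermittency and the two regularity bounds — over each window $[k,k+s]$ the index $i$ attaining $\max_{i}d(x_k,C_i)\ge d(x_k,C)/\kappa_S$ is treated at some step $j\le k+s$, where $\|U_{i,j}(x_j)-x_j\| \ge \varepsilon\|T_i(x_j)-x_j\| \ge \varepsilon\delta_S\,d(x_j,C_i)$ whenever $x_j\notin C_i$, while $d(\cdot,C_i)$ is $1$-Lipschitz — one deduces $d(x_k,C)\to 0$.

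The main obstacle is to upgrade this asymptotic statement to finite termination using (ii). The point is that, for $i\in I_k$ with $x_k\notin C_i$, definition \eqref{int:betak} forces an \emph{overshoot}: $\|U_{i,k}(x_k)-x_k\| = \alpha_k\bigl(r_k/\varphi_i(x_k)+\|T_i(x_k)-x_k\|\bigr) \ge \varepsilon r_k/\Delta$, a lower bound that does not vanish as $d(x_k,C_i)\to 0$, whereas the overshoot-free version of \eqref{int:xk} — an $s$-intermittent average of relaxed cutters — converges to $C\cap Q$ at a \emph{geometric} rate, by the bounded linear regularity of the operators $T_i$ in (vi) and of the sets $C_i$ via Lemma \ref{lem:BLR}. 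Reading the overshoots as a perturbation of size $O(r_k)$ of this geometrically contracting recursion and invoking \eqref{int:STAG} — which says $r_k$ eventually dominates every geometric sequence — one shows that $d(x_k,C)$ becomes negligible relative to $r_k$; then, for $k$ large, the overshoot $\gtrsim r_k$ dominates $d(x_k,C_i)\le d(x_k,C)$ for every violated constraint, while the fixed inclusion $B(z,r)\subset C$ (together with $r_k\to 0$) keeps any such overcorrection inside $C$. Moreover, if over a window $[k,k+s]$ every $x_j$ were feasible for all of $I_j$, then $x_k=\cdots=x_{k+s+1}\in\bigcap_{i\in I}C_i=C$; so, as long as $x_k\notin C$, some constraint is genuinely violated and overshot in each window, and one repairs all of $\bigcap_{i\in I}C_i$ within finitely many windows, producing $x_k\in C\cap Q$ for some $k$ and contradicting the assumption. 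Carrying out this last step rigorously — controlling the interplay of the averaging over $i\in I_k$, the projection onto $Q$, the (generally non-normal) overshoot directions, and the exact constants needed so that ``overshoot beats distance'' really lands the iterate in $C$ — is the delicate part, and it is here that the argument runs parallel to \cite{CCP11, DePI88, IM86}.
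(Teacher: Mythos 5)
Your setup (contradiction, Fej\'er monotonicity from Lemma \ref{lem:basic}, summability of $\sum_k\sum_{i}\|U_{i,k}(x_k)-x_k\|^2$, and hence $d(x_k,C)\to 0$ via intermittency and the two regularity constants) matches the paper's preliminary work, and you have correctly identified the tension that drives the result: a geometric contraction mechanism versus the subgeometric decay \eqref{int:STAG} of $r_k$. However, the core of the proof --- the step that actually produces the contradiction --- is only sketched, and the sketch as stated does not go through. First, reading the overshoots as $O(r_k)$ perturbations of a geometrically contracting recursion yields at best $d(x_k,C)=O(r_k)$ with a constant of order $1/(1-q)$ (the damped sum $\sum_{j<k}q^{k-j}r_j$ is \emph{comparable} to $r_k$ when $r_k$ decays subgeometrically, not negligible relative to it), so the claim that ``$d(x_k,C)$ becomes negligible relative to $r_k$'' is unjustified and the subsequent ``overshoot beats distance'' comparison has no margin to work with. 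Second, the mechanism ``the overcorrection lands the iterate in $C$'' is not what the iteration does: the update direction is an \emph{average} of the vectors $\beta_{i,k}(x_k)(T_i(x_k)-x_k)$ over $i\in I_k$ followed by $P_Q$, and overshooting each violated $C_i$ separately does not place the averaged, projected point in $C=\bigcap_i C_i$; the paper never argues that an iterate literally lands in $C$.

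The missing ingredient is the family of erosions $C^{\varepsilon}=\{x\in C: B(x,\varepsilon)\subset C\}$ together with Lemma \ref{lem:erosion} (in the form \eqref{eq:erosion}). The paper tracks the single scalar quantity $d(x_k, C^{\varepsilon_k}\cap Q)$ with $\varepsilon_k:=r_k/\delta$, and proves three facts: (a) over each window of length $s$ containing an active step, $d^2(x_k,C)$ is controlled by the decrease of $d^2(\cdot,C^{\varepsilon_k}\cap Q)$; (b) $\varepsilon_k^2$ is controlled by the same decrease, because the overshoot forces $\|U_{i,k}(x_k)-x_k\|\geq \varepsilon r_k/\Delta$ exactly as you observed; and (c) via \eqref{eq:erosion}, $d(x_k,C^{\varepsilon_k})\leq M\,(d(x_k,C)+\varepsilon_k)$ with a uniform $M$ coming from the Fej\'er monotonicity toward $C^{r}\cap Q$. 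Combining (a)--(c) with the linear regularity of Lemma \ref{lem:BLR} (applied to the enlarged family containing the erosions and $Q$, not just the $C_i$) yields a genuine one-step contraction $d(x_{k+s},C^{\varepsilon_k}\cap Q)\leq q\,d(x_k,C^{\varepsilon_k}\cap Q)$ with a \emph{uniform} $q<1$ at every active step, while (b) also gives the lower bound $d(x_{k_n},C^{\varepsilon_{k_n}}\cap Q)\gtrsim \varepsilon_{k_n}$ along the active subsequence $k_n\leq K+(2n+1)s$; chaining the contraction (using the monotonicity $C^{\varepsilon_k}\subset C^{\varepsilon_n}$ for $n\geq k$) forces $\varepsilon_{k_n}\leq cq^n$, which contradicts \eqref{int:STAG}. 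Without the erosion sets and inequality \eqref{eq:erosion} there is no single quantity that simultaneously contracts geometrically and dominates $r_k$, and your proposal does not supply a substitute; as you acknowledge, the delicate part is precisely the part left undone.
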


\begin{proof}
Let $\{x_k\}_{k=0}^\infty$ be a bounded trajectory defined by \eqref{int:xk}--\eqref{int:betak} and suppose to the contrary that $\{x_k\}_{k=0}^\infty \subset \mathcal H\setminus (C \cap Q)$. We divide the rest of the proof into a few technical steps, where in the last step, we arrive at a contradiction using assumption (ii). Before doing so, we set up some notations and present basic properties of the generated trajectory.

Following the notation of Lemma \ref{lem:basic}, for each $k$ and $i$, set
\begin{equation}\label{}
  V_k(x) := \sum_{i\in I_k} \lambda_{i,k}U_{i,k}(x),
  \quad \text{where} \quad
  U_{i,k}(x) := x + \alpha_k \beta_{i,k}(x)(T_i(x) - x),\quad x \in \mathcal H.
\end{equation}
Note that we can write $x_{k+1} = P_Q(V_k(x_k))$ for short. Let $S$ be a bounded subset of $\mathcal H$ containing $\{x_k\}_{k=0}^\infty$. By assumptions (iii) and (iv), there are $\delta,\Delta>0$, both depending on $S$, such that for all $x \in S$ and for all $i\in I$, we have
\begin{equation}\label{pr:main:delta}
  \delta d(x, C_i) \leq \|T_i(x) - x\|
  \qquad \text{and} \qquad
  \delta \leq \varphi_i(x) \leq \Delta.
\end{equation}
This holds, in particular, for all $x = x_k$, $k= 0, 1 ,2, \ldots$. In addition, by assumption (i), there are $r>0$ and $z\in Q$ such that $B(z,r) \subset C$. Moreover, by condition (ii), we see that $\varepsilon_k := \frac{r_k}{\delta} \leq r$ for all large enough $k\geq K \geq 0$. Without loss of generality, we may assume that $\varepsilon_K= r$. In addition, assumption (i) guarantees a uniform linear regularity condition over $S$ for any subfamily of
\begin{equation}\label{}
  \mathcal D := \{C_i \colon i \in I\}
  \cup \{C^\varepsilon \colon 0 \leq \varepsilon \leq  r\}
  \cup \{Q\},
\end{equation}
as in Lemma \ref{lem:BLR}. Recall here that the $\varepsilon$-erosion $C^{\varepsilon} = \{ x \in C \colon B(x,\varepsilon) \subset C\}$; compare with \eqref{def:erosion}. Indeed, by eventually decreasing $r$, we may ensure that $\interior(C^{r}) \cap Q \neq \emptyset$. This leads to $\interior \big(\bigcap (\mathcal D \setminus \{Q\}) \big) \cap Q \neq \emptyset$. Consequently, by Lemma \ref{lem:BLR}, there is $\kappa = \kappa_S > 0$, such that for each nonempty subfamily $\mathcal D'$ of $\mathcal D$ and for each $x \in S$, we have
\begin{equation}\label{pr:main:BLR}
  d\left(x, \bigcap \mathcal D'\right) \leq \kappa \sup_{D \in \mathcal D'} d(x, D).
\end{equation}
		
\underline{\emph{Step 1.}} We claim that for all $k \geq K$, we have
\begin{equation}\label{pr:main:step1}
  d^2(x_k, C ) \leq \frac{2 s \kappa^2}{\delta^2 \varepsilon^3}
  \left( d^2(x_k, C^{\varepsilon_k} \cap Q) - d^2(x_{k+s}, C^{\varepsilon_k} \cap Q)\right).
\end{equation}
Let $n\geq k\geq K$. Note that by the monotonicity of $\{\varepsilon_k\}_{k=0}^\infty$, we have $C^{\varepsilon_k} \cap Q \subset C^{\varepsilon_n} \cap Q$. Therefore, by Lemma \ref{lem:basic} (inequality \eqref{lem:basic:ineq2}), we see that
\begin{equation}\label{pr:main:xnFejer}
  \|x_{n+1}-x_n\|^2 \leq \frac{2}{2-\alpha_n}\big(\|x_n-z\|^2 - \|x_{n+1}-z\|^2\big)
  \leq \frac{2}{\varepsilon}\big(\|x_n-z\|^2 - \|x_{n+1}-z\|^2\big)
\end{equation}
for all $z\in C^{\varepsilon_k} \cap Q$. Note that \eqref{pr:main:xnFejer} also holds trivially when $x_{n+1} = x_n$. This implies that the tail $\{x_n\}_{n=k}^\infty$ is Fej\'er monotone towards $C^{\varepsilon_k} \cap Q$.

Let $i \in I$ and for each $k \geq K$, let $m_k = m_k(i) \in \{k,k+1,\ldots,k+s-1\}$ be the smallest integer for which $i \in I_{m_k}$. By the triangle inequality and the properties of the metric projection, we have
\begin{equation}\label{}
  d(x_k, C_i) \leq \|x_k - P_{C_i}(x_{m_k})\| \leq \sum_{n=k}^{m_k-1}\|x_{n+1} - x_n\| + d(x_{m_k},C_i)
\end{equation}
which, when combined with the Cauchy-Schwarz inequality, leads to
\begin{equation}\label{pr:dxkCi}
  d^2(x_k, C_i) \leq s \left( \sum_{n=k}^{m_k-1} \|x_{n+1} - x_n\|^2 + d^2(x_{m_k},C_i)\right).
\end{equation}
Note here that we use the Cauchy-Schwarz inequality for real numbers, that is, $(\sum_{i=1}^{m}a_i)^2 \leq m \cdot \sum_{i=1}^{m}a_i^2$. By setting $z_k := P_{C^{\varepsilon_k} \cap Q}(x_k)$ in \eqref{pr:main:xnFejer}, we see that the first summand from \eqref{pr:dxkCi} can be estimated by
\begin{align}\label{pr:sumxn}
  \sum_{n=k}^{m_k-1} \|x_{n+1} - x_n\|^2 \nonumber
  & \leq \frac{2}{\varepsilon}\big(\|x_k-z_k\|^2 - \|x_{m_k}-z_k\|^2\big) \\ \nonumber
  & \leq \frac{2}{\varepsilon}\big(\|x_k-z_k\|^2 - \|x_{m_k}-z_{m_k}\|^2\big) \\
  & = \frac{2}{\varepsilon}\big( d^2(x_k, C^{\varepsilon_k} \cap Q) - d^2(x_{m_k}, C^{\varepsilon_k} \cap Q) \big).
\end{align}
By \eqref{pr:main:delta}, by using \eqref{lem:basic:ineq1a} with $z_k' := P_{C^{\varepsilon_k} \cap Q}(x_{m_k})$ and by the above mentioned Fej\'{e}r monotonicity of the tail $\{x_n\}_{n=k}^\infty$, we can estimate the second summand in \eqref{pr:dxkCi} by
\begin{align}\label{pr:dxmkCi} \nonumber
  d^2(x_{m_k}, C_i)
  & \leq \frac 1 {\delta^2} \|T_i(x_{m_k}) - x_{m_k}\|^2
  \leq \frac 1 {\delta^2 \alpha_{m_k}^2} \|U_{i,m_k}(x_{m_k}) - x_{m_k}\|^2 \\ \nonumber
  & \leq \frac 1 {\delta^2 \alpha_{m_k}^2 \lambda_{i, m_k}} \big(\|x_{m_k}-z_{k}'\|^2 - \|V_{m_k}(x_{m_k})-z_{k}'\|^2\big) \\ \nonumber
  & \leq \frac 1 {\delta^2 \varepsilon^3} \big(\|x_{m_k}-z_{k}'\|^2 - \|P_Q(V_{m_k}(x_{m_k}))-z_{k}'\|^2\big) \\ \nonumber
  & \leq \frac 1 {\delta^2 \varepsilon^3} \big(\|x_{m_k}-z_{k}'\|^2 - \|x_{k+s}-z_{k}'\|^2\big) \\ \nonumber
  & \leq \frac 1 {\delta^2 \varepsilon^3} \big(\|x_{m_k}-z_{k}'\|^2 - \|x_{k+s}-z_{k+s}\|^2\big) \\
  & = \frac 1 {\delta^2 \varepsilon^3} \big( d^2(x_{m_k}, C^{\varepsilon_k} \cap Q) - d^2(x_{k+s}, C^{\varepsilon_k} \cap Q) \big).
\end{align}
Consequently, by \eqref{pr:dxkCi}--\eqref{pr:dxmkCi} and by the arbitrariness of $i \in I$, we have
\begin{equation}\label{pr:maxdxkCi}
  \max_{i \in I}d^2(x_k, C_i) \leq \frac{2s}{\delta^2 \varepsilon^3}
  \left( d^2(x_k, C^{\varepsilon_k} \cap Q) - d^2(x_{k+s}, C^{\varepsilon_k} \cap Q)\right).
\end{equation}
This, when combined with \eqref{pr:main:BLR} applied to the family $\mathcal D' = \{C_i \colon i \in I\}$, shows \eqref{pr:main:step1}, as asserted.

\underline{\emph{Step 2.}} We show that for all $k \geq K$ such that $x_{k+1} \neq x_k$, we have
\begin{equation}\label{pr:main:step2}
  \varepsilon_k^2 \leq \frac{\Delta^2}{\varepsilon^3 \delta^2}
  \left( d^2(x_k, C^{\varepsilon_k} \cap Q) - d^2(x_{k+s}, C^{\varepsilon_k} \cap Q)\right).
\end{equation}
Indeed, let $i\in I_k$ be such that $T_i(x_k) \neq x_k$. Then, by the definition of $U_{i,k}$, we have
\begin{equation}\label{}
  \|U_{i,k}(x_k)-x_k\| = \alpha_k \beta_{i,k}(x_k)\|T_i(x_k)-x_k\|
  \geq \varepsilon \frac{r_k}{\varphi_i(x_k)}
  \geq  \frac{\varepsilon \delta}{\Delta} \varepsilon_k.
\end{equation}
By using an argument similar to \eqref{pr:dxmkCi}, we arrive at
\begin{align}\label{}
  \varepsilon_k^2
  & \leq \frac{\Delta^2}{\varepsilon^2 \delta^2} \|U_{i,k}(x_k)-x_k\|^2
  \leq \frac{\Delta^2}{\varepsilon^2 \delta^2 \lambda_{i,k}}  \big(\|x_k-z_k\|^2 - \|V_k(x_k)-z_k\|^2\big) \nonumber \\
  & \leq \frac{\Delta^2}{\varepsilon^3 \delta^2} \big(\|x_k-z_k\|^2 - \|P_Q(V_k(x_k))-z_k\|^2\big) \nonumber \\
  & = \frac{\Delta^2}{\varepsilon^3 \delta^2} \big(\|x_k-z_k\|^2 - \|x_{k+s}-z_k\|^2\big) \nonumber \\
  & = \frac{\Delta^2}{\varepsilon^3 \delta^2} \big(\|x_k-z_k\|^2 - \|x_{k+s}-z_{k+s}\|^2\big) \nonumber \\
  & \leq \frac{\Delta^2}{\varepsilon^3 \delta^2} \big(d^2(x_k,C^{\varepsilon_k} \cap Q) - d^2(x_{k+s},C^{\varepsilon_k} \cap Q)\big),
\end{align}
which proves \eqref{pr:main:step2}.

\underline{\emph{Step 3.}} We show that for all $k \geq K$, such that $x_{k+1} \neq x_k$, we have
\begin{equation}\label{pr:main:step3}
	d(x_{k+s},Q\cap C^{\varepsilon_k})\leq q \cdot d(x_k,Q\cap C^{\varepsilon_k}),
\end{equation}
where
\begin{equation}\label{pr:main:qAndM}
  q := \sqrt{1 - \frac {\delta^2 \varepsilon^3}{2 s \kappa^4 \Delta^2 M^2}}\in (0,1)
  \quad \text{and} \quad
  M := \frac{d(x_K, C^r \cap Q)}{r} \geq 1.
\end{equation}
Indeed, by \eqref{pr:main:BLR} applied to the subfamily $\mathcal D_k':=\{C^{\varepsilon_k}, Q\}$, we get
\begin{equation}\label{}
  d^2(x_k, C^{\varepsilon_k} \cap Q) \leq \kappa^2 \max \{d^2(x_k, C^{\varepsilon_k}), d^2(x_k,Q) \} = \kappa^2 d^2(x_k, C^{\varepsilon_k}).
\end{equation}
On the other hand, by the Fej\'er monotonicity of $\{x_n\}_{n=K}^\infty$ towards $C^r \cap Q$, we have $d(x_k,C^r)/r \leq M$.
This, when combined with \eqref{eq:erosion}, the Cauchy-Schwarz inequality, \eqref{pr:main:step1} and \eqref{pr:main:step2}, yields
\begin{align}\label{}
  d^2(x_k, C^{\varepsilon_k})
  & \leq (d(x_k, C) + \varepsilon_k)^2 \frac{d^2(x, C^r \cap Q)}{(d(x_k,C) + r)^2}
  \leq 2 M^2 (d^2(x_k, C) + \varepsilon_k^2)  \nonumber \\
  & \leq \frac{2 s \kappa^2 \Delta^2 M^2}{\delta^2 \varepsilon^3}
  \left( d^2(x_k, C^{\varepsilon_k} \cap Q) - d^2(x_{k+s}, C^{\varepsilon_k} \cap Q)\right)
\end{align}
and consequently, we have
\begin{equation}\label{}
  d^2(x_k, C^{\varepsilon_k} \cap Q) \leq \frac{2 s \kappa^4 \Delta^2 M^2}{\delta^2 \varepsilon^3}
  \left( d^2(x_k, C^{\varepsilon_k} \cap Q) - d^2(x_{k+s}, C^{\varepsilon_k} \cap Q)\right).
\end{equation}
By simply rearranging the terms, we arrive at \eqref{pr:main:step3}.

\underline{\emph{Step 4.}} We arrive at a contradiction with the assumption that $\{x_k\}_{k=0}^\infty\subset\mathcal H\setminus C \cap Q$ using condition (ii). Indeed, since the control is $s$-intermittent and since $\fix P_Q V_k=Q \cap \bigcap_{i\in I_k}C_i$, there must be a sequence $\{k_n\}_{n=0}^\infty$, where each $k_n\in\{K+2ns,\ldots,K+(2n+1)s\}$ is the smallest number such that $x_{k_n}\neq x_{k_n+1}$. The monotonicity of $\{\varepsilon_k\}_{k=0}^\infty$ and the above-mentioned Fej\'er monotonicity of the sequence $\{x_n\}_{n=k}^\infty$ towards $C^{\varepsilon_k} \cap Q$, which holds for all $k \geq K$, when combined with \eqref{pr:main:step3}, give us
\begin{equation}\label{}
  d(x_{k_{n+1}}, C^{\varepsilon_{k_{n+1}}} \cap Q)
  \leq d(x_{k_{n+1}}, C^{\varepsilon_{k_n}} \cap Q)
  \leq d(x_{k_n+s}, C^{\varepsilon_{k_n}} \cap Q)
  \leq q \cdot d(x_{k_n}, C^{\varepsilon_{k_n}} \cap Q).
\end{equation}
By \eqref{pr:main:step3}--\eqref{pr:main:qAndM} and by using induction, we arrive at
\begin{equation}\label{linear}
  \frac{\varepsilon \delta }{\Delta} \varepsilon_{k_n}
  \leq d(x_{k_n}, C^{\varepsilon_{k_n}} \cap Q)
  \leq q^n \cdot d(x_{k_0}, C^{\varepsilon_{k_0}} \cap Q) \leq M r q^n,
\end{equation}
which, after rearranging, leads to $\varepsilon_{k_n} \leq c q^n$, where $c = \frac{Mr\Delta}{\varepsilon \delta}$. On the other hand, by condition (ii), there is a natural number $K'$ such that for all $k \geq K'$, we have
\begin{equation}\label{}
  \varepsilon_k > (c q^{\frac{-K}{2s}}) \cdot q^{\frac{k}{2s}},
\end{equation}
which, when applied to large enough $k = k_n$, yields
\begin{equation}\label{}
  \varepsilon_{k_n}
  > (c q^{\frac{-K}{2s}}) \cdot q^{\frac{k_n}{2s}}
  \geq (c q^{\frac{-K}{2s}}) \cdot q^{\frac{K + 2ns}{2s}}
  = c q^n.
\end{equation}
Thus we have arrived at a contradiction and consequently we must have $x_k\in C\cap Q$ for some $k$. This completes the proof.
\end{proof}

It is clear that the boundedness of the trajectories $\{x_k\}_{k=0}^\infty$ defined in Theorem \ref{thm:main} is guaranteed when the set $Q$ is bounded. However, there are two situations in which boundedness of $Q$ is not required and where the boundedness of $\{x_k\}_{k=0}^\infty$ still holds. We emphasize here that assumption (i) of Theorem \ref{thm:main}, or its variant (i') described below, and the assumption that $r_k \to 0$ are crucial.

\begin{example}[Bounded Trajectories] \label{ex:phi} ~
  \begin{enumerate}[(a)]
    \item Following \cite[Example 4.8]{KRZ20}, for each $x \in \mathcal H$, define
    \begin{equation}\label{ex:phi:eq2}
      \varphi_i(x) := 1.
    \end{equation}
  Obviously, $\varphi_i(x)$ satisfies (iii). Furthermore, assumptions (i), (ii) and (iii) allow us to apply Lemma \ref{lem:basic}. In particular, in view of \eqref{lem:basic:ineq2}, the sequence $\{x_k\}_{k=0}^\infty$ is bounded. Consequently, $x_k\in C\cap Q$ for some $k$.

  \item Following \cite[Example 4.9]{KRZ20}, assume that $C_i=\{x\colon  f_i(x)\leq 0\}$ for some convex and lower semicontinuous functions $f_i\colon\mathcal H \to \mathbb R$, $i\in I$. Let $T_i := P_{f_i}$ and let $g_i\colon \mathcal H \to \mathcal H$ be the associated subgradient mapping. For each $x\in \mathcal H$, define
  \begin{equation}\label{ex:phi:eq2}
    \varphi_i(x):=
    \begin{cases}
      \|g_i(x)\|, & \mbox{if } f_i(x)>0 \\
      1, & \mbox{otherwise}.
    \end{cases}
  \end{equation}
  Assume that (i') $f(z):= \max_{i\in I} f_i(z) <0$ for some $z\in Q$; (ii') = (ii); and (iii') $\partial f_i (S)$ is bounded for bounded subsets $S\subset \mathcal H$, $i\in I$. By using (i') and (ii'), one can show that the sequence $\{x_k\}_{k=0}^\infty$ is bounded. Moreover, assumptions (i'), (ii') and (iii') imply conditions (i),(ii) and (iii); see \cite[Example 4.9]{KRZ20}. Furthermore, conditions (i') and (iii') imply condition (vi) as $P_{f_i}$ is boundedly linearly regular (see \cite[Example 2.11]{CRZ20}). Consequently, $x_k\in C\cap Q$ for some $k$.
  \end{enumerate}
\end{example}

\begin{remark}[see also Remark \ref{rem:interiorAndBLR}]
\begin{enumerate} [(a)]
  \item Condition (i) of Theorem 3.1 that $\interior (C) \cap Q \neq \emptyset$ appears in many other works concerning finitely convergent algorithms with overrelaxations; see for example,  \cite{BWWX15, Cro04, KRZ20, Pol01}. Moreover, even a much stronger Slater's condition appears in \cite{CCP11, DePI88, IM86}. Furthermore, there are other algorithms which also require variation of condition (i) in order to guarantee finite convergence; see, for example, \cite{BDNP16, Fuk82, IM87, Pan14}. Thus, it appears that in a theoretical study condition (i) is a rather standard assumption in obtaining the finite convergence property.

  \item On the other hand, the requirement regarding the nonempty interior of one of the constraints is quite strong, in particular, when the space $\mathcal H$ is infinite-dimensional. An example of such a set concerning inequality constraints can be found in \cite[Section 4]{BM21}. A nontrivial sufficient condition for nonempty interior is given in \cite[Theorem 1]{KP89} in the setting of an ordered Banach space. Another nontrivial example (related to the constrained infinite-dimensional Mayer problem) can be found in the proof of \cite[Theorem 4.2]{FMM18}.

  \item There are many other notions that generalize the concept of an interior in convex optimization; see, for example, \cite{BC17, BC12, Z02, Z15}. Thus it might be of interest to extend the result of Theorem \ref{thm:main} with the use of another interiority notion. We leave this part as an open problem for further research.
\end{enumerate}
\end{remark}

\section{Asymptotic Convergence} \label{sec:asymptotic}
	
In this section we study the convergence behavior of the method of Theorem \ref{thm:main} by relaxing conditions (i) and (vi). On the other hand, we require summable overrelaxation parameters in condition (ii). By doing so, we may employ a quasi-Fej\'erian analysis to ensure the asymptotic convergence, either weak or strong.

\begin{theorem}\label{thm:main2}
Let $\{x_k\}_{k=0}^\infty$ be a sequence defined  by \eqref{int:xk}--\eqref{int:betak}. Assume that
\begin{enumerate}[(i)]
  \item $C \cap Q \neq \emptyset$,
  \item $\sum_{k=0}^\infty r_k<\infty$,
\end{enumerate}
and that conditions (iii)--(v) of Theorem \ref{thm:main} are satisfied. Moreover, assume that
\begin{enumerate}
  \item[(vi)] $T_i$ are weakly regular, $i\in I$.
\end{enumerate}
If the sequence $\{x_k\}_{k=0}^\infty$ is bounded (see Example \ref{ex:phi2}), then it converges weakly to some $x_\infty \in C\cap Q$. Furthermore, if
\begin{enumerate}
  \item[(vi')] $T_i$ are boundedly regular, $i\in I$, and $\{Q\}\cup\{C_i\:\colon\:i\in I\}$ is boundedly regular,
\end{enumerate}
then the convergence is in norm.
\end{theorem}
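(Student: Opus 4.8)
The plan is to apply the quasi-Fej\'erian machinery of Lemma \ref{lem:QFM} to rewrite method \eqref{int:xk}--\eqref{int:betak} in the form \eqref{lem:QFM:xk}. First I would set, following the notation of the proof of Theorem \ref{thm:main},
\begin{equation*}
  V_k(x) := \sum_{i\in I_k}\lambda_{i,k}U_{i,k}(x),
  \qquad
  U_{i,k}(x) := x + \alpha_k\beta_{i,k}(x)\big(T_i(x)-x\big).
\end{equation*}
The trouble is that $V_k$ is not itself a cutter in an obvious way because of the overrelaxation factors $\beta_{i,k}\geq 1$. To get around this, I would write $V_k(x_k) = W_k(x_k) + e_k$, where $W_k := \sum_{i\in I_k}\lambda_{i,k}\big(x + \alpha_k(T_i(x)-x)\big)$ is an honest averaged relaxation of cutters (hence $\alpha_k$-relaxation of the cutter $\sum_{i\in I_k}\lambda_{i,k}T_i$ with $\fix = \bigcap_{i\in I_k}C_i$), and the error is $e_k = \sum_{i\in I_k}\lambda_{i,k}(\beta_{i,k}(x_k)-1)(T_i(x_k)-x_k)$. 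By the definition \eqref{int:betak} of $\beta_{i,k}$, each term has norm $\lambda_{i,k}\,r_k/\varphi_i(x_k)$, so by boundedness of $\{x_k\}$ and assumption (iii), $\|e_k\|\leq r_k/\delta$ for $S\supset\{x_k\}$; summability of $\{r_k\}$ (assumption (ii)) then gives $\sum_k\|\alpha_k e_k\|\leq \tfrac{2}{\delta}\sum_k r_k<\infty$. Since $\fix W_k \supset C$ for every $k$ and $x_{k+1}=P_Q(x_k+\alpha_k(W_k(x_k)-x_k+e_k))$, we are exactly in the setting of Lemma \ref{lem:QFM} with the family $\{W_k\}$ (choosing $\lambda_{i,k}=0$ for $i\notin I_k$ causes no harm, or one simply works with the finite averages over $I_k$). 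Hence $\{x_k\}$ is QF1 with respect to $C\cap Q$, and moreover $\sum_k\|W_k(x_k)-x_k\|^2<\infty$ and $\sum_k\|x_{k+1}-x_k\|^2<\infty$.

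Next I would upgrade the summability of $\|W_k(x_k)-x_k\|$ to control of $\|T_i(x_k)-x_k\|$ along the $s$-intermittent control. From $\sum_k\|W_k(x_k)-x_k\|^2<\infty$ we get $W_k(x_k)-x_k\to 0$, hence by assumption (v) (intermittency) and the boundedness of the weights away from $0$, for each $i\in I$ there is along each window $\{k,\dots,k+s\}$ some index $m_k(i)$ with $i\in I_{m_k(i)}$ and $\alpha_{m_k(i)}\lambda_{i,m_k(i)}\|T_i(x_{m_k(i)})-x_{m_k(i)}\|\to 0$; combined with $\|x_{k+1}-x_k\|\to 0$ this yields $\|T_i(x_k)-x_k\|\to 0$ for every $i\in I$. (This is the standard ``telescoping across a window'' argument already used in Step 1 of the proof of Theorem \ref{thm:main}.) Now let $x_\infty$ be an arbitrary weak cluster point of $\{x_k\}$, say $x_{k_\ell}\rightharpoonup x_\infty$. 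Then $T_i(x_{k_\ell})-x_{k_\ell}\to 0$ for each $i$, and since $x_\infty\in\cl Q\ni$ follows from $x_k\in Q$ and weak closedness of the convex set $Q$, weak regularity of $T_i$ (assumption (vi)) gives $x_\infty\in\fix T_i=C_i$; intersecting over $i\in I$ yields $x_\infty\in C\cap Q$. By Theorem \ref{thm:QFM}(ii), $\{x_k\}$ converges weakly to a point of $C\cap Q$.

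For the norm-convergence statement under (vi'), by Theorem \ref{thm:QFM}(iii) it suffices to show $d(x_k,C\cap Q)\to 0$. Since $\|T_i(x_k)-x_k\|\to 0$ for each of the finitely many $i\in I$, we have $\sup_{i\in I}\|T_i(x_k)-x_k\|\to 0$; bounded regularity of each $T_i$ gives $d(x_k,C_i)\to 0$ for each $i$, hence $\sup_{i\in I}d(x_k,C_i)\to 0$; also $x_k\in Q$ gives $d(x_k,Q)=0$. Now bounded regularity of the family $\{Q\}\cup\{C_i:i\in I\}$ (assumption (vi'), applied over the bounded set $S$ containing $\{x_k\}$) yields $d\big(x_k,Q\cap C\big)\to 0$, which is what we needed. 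I expect the only genuinely delicate point to be the first paragraph: correctly identifying the cutter part $W_k$ and the summable error $e_k$, and checking that the hypotheses of Lemma \ref{lem:QFM} are met (in particular that the relaxations $\alpha_k\in[\varepsilon,2-\varepsilon]$ from assumption (iv) and the nonemptiness $C\cap Q\neq\emptyset$ from assumption (i) are exactly what that lemma requires); everything afterwards is a routine combination of intermittency, weak/bounded regularity, and Theorem \ref{thm:QFM}.
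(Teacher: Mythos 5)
Your overall architecture is exactly the paper's: the same decomposition of the iteration into a cutter part $W_k=\sum_{i\in I_k}\lambda_{i,k}T_i$ plus a summable error $e_k=\sum_{i\in I_k}\lambda_{i,k}(\beta_{i,k}(x_k)-1)(T_i(x_k)-x_k)$ with $\|e_k\|\le r_k/\delta$, the same appeal to Lemma \ref{lem:QFM} to get QF1 and the two summable series, and the same cluster-point and regularity arguments via Theorem \ref{thm:QFM}. (Minor notational point: for Lemma \ref{lem:QFM} to apply, $W_k$ must be the \emph{unrelaxed} average $\sum_{i\in I_k}\lambda_{i,k}T_i$, which is a cutter; your displayed definition of $W_k$ includes the factor $\alpha_k$, which would make it only SQNE and also double-count the relaxation in the formula $x_{k+1}=P_Q(x_k+\alpha_k(W_k(x_k)-x_k+e_k))$. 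Your subsequent use is consistent with the unrelaxed version, so this is a slip rather than an error.)

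There are, however, two genuine gaps in your middle step, where you pass from $\|W_k(x_k)-x_k\|\to 0$ to ``$\|T_i(x_k)-x_k\|\to 0$ for every $i\in I$.'' First, $\|W_k(x_k)-x_k\|=\bigl\|\sum_{i\in I_k}\lambda_{i,k}(T_i(x_k)-x_k)\bigr\|$ is the norm of a \emph{sum}, so its vanishing does not by itself control the individual displacements: the terms could cancel, and ``weights bounded away from zero'' does not rule this out. The cancellation is excluded only by the cutter property: for $z\in C$ one has $\langle z-x,T_i(x)-x\rangle\ge\|T_i(x)-x\|^2$, whence $\|W_k(x_k)-x_k\|\ge\frac{\varepsilon}{2R}\sum_{i\in I_k}\|T_i(x_k)-x_k\|^2$; this is precisely the inequality the paper imports from \cite[Theorem 8]{KRZ17}, and it yields $\max_{i\in I_k}\|T_i(x_k)-x_k\|\to 0$, i.e., smallness only for indices $i$ \emph{active at step} $k$. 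Second, your upgrade from the windowed indices $m_k(i)$ to all $k$ (``combined with $\|x_{k+1}-x_k\|\to 0$ this yields $\|T_i(x_k)-x_k\|\to 0$'') implicitly uses continuity of $T_i$, which is not assumed: cutters such as subgradient projections may be discontinuous, so $\|T_i(x_{m_k})-x_{m_k}\|\to 0$ and $\|x_k-x_{m_k}\|\to 0$ do not give $\|T_i(x_k)-x_k\|\to 0$. The paper sidesteps this by never transferring the displacement: in the weak case it applies weak regularity along the shifted subsequence $x_{m_k}\rightharpoonup x_\infty$ directly, and in the norm case it transfers the (1-Lipschitz) \emph{distance} via $d(x_k,C_i)\le\|x_k-x_{m_k}\|+d(x_{m_k},C_i)$ after applying bounded regularity at $x_{m_k}$. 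Both of your gaps are repairable by exactly these substitutions, but as written the inference is not valid for the class of operators the theorem covers.
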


\begin{proof}
Let $S = B(z, R)$ be a ball in $\mathcal H$ containing $\{x_k\}_{k=0}^\infty$, centered at some $z \in C$. By assumption (iv) there are $\delta, \Delta > 0$ such that $\delta \leq \varphi_i(x_k) \leq \Delta$ for all $k=0,1,2,\ldots$. We divide the proof into four steps.

\underline{\emph{Step 1.}} We first show that $\{x_k\}_{k=0}^\infty$ is QF1 with respect to $C \cap Q$. Indeed, define
\begin{equation}
  W_k := \sum_{i\in I_k}\lambda_{i,k} T_i \quad \text{and} \quad
	e_k := \sum_{i\in I_k}\lambda_{i,k}\frac{r_k}{\varphi_i(x_k)} \frac{T_i(x_k)-x_k}{\|T_i(x_k)-x_k\|},
\end{equation}
and note that $x_{k+1} = P_Q(x_k + \alpha_k(W_k(x_k) - x_k + e_k))$ for all $k=0,1,2,\ldots$. Moreover, it is not difficult to see that $\sum_{k=0}^\infty \|e_k\| < \infty$ as $\|e_k\| \leq \frac{r_k}{\delta}$. Consequently, in view of Lemma \ref{lem:QFM}, the sequence $\{x_k\}_{k=0}^\infty$ is QF1 with $\varepsilon_k = \alpha_k \|e_k\|$, as asserted.

\underline{\emph{Step 2.}} Note that, by Lemma \ref{lem:QFM}, we obtain $\|W_k(x_k) - x_k\| \to 0$ and $\|x_{k+1} - x_k\| \to 0$ as $k \to \infty$. We also have
\begin{equation}\label{pr:main2:Ti}
  \max_{i\in I_k} \|T_i(x_k) - x_k\| \to 0
\end{equation}
as $k \to \infty$. To see this, it suffices to apply \cite[Theorem 8]{KRZ17} in view of which
\begin{equation}
  \|W_k(x_k)-x_k\|
  \geq \frac{\lambda_{i,k}}{2R} \frac{2-\alpha_k}{\alpha_k} \sum_{i\in I_k}\|T_i(x_x)-x_k\|^2
  \geq \frac{\varepsilon^2}{4R} \sum_{i\in I_k}\|T_i(x_x)-x_k\|^2.
\end{equation}

\underline{\emph{Step 3.}} We are now ready to show that assumption (iii) leads to weak convergence. Indeed, let $x_\infty$ be a weak cluster point of $\{x_k\}_{k=0}^\infty$ and let $x_{n_k}\rightharpoonup x_\infty$. Let $i\in I$ be fixed and for each $k=0,1,2,\ldots$, let $m_k\in\{n_k,\ldots,n_k+s-1\}$ be the smallest natural number such that $i\in I_{m_k}$. The existence of such a sequence is guaranteed by assumption (vi). Obviously, by \emph{Step 2}, we obtain $x_{m_k}\rightharpoonup x_\infty$. Furthermore, using \eqref{pr:main2:Ti}, we deduce that
\begin{equation}
	\|T_i( x_{m_k})-x_{m_k}\| \to 0
\end{equation}
as $k \to \infty$. Hence the assumed weak regularity of $T_i$ yields that $x_\infty \in C_i$. The arbitrariness of $i$ and the fact that $x_{k} \in Q$ imply that $x_\infty \in C \cap Q$. Thus we have shown that each weak cluster point of $\{x_k\}_{k=0}^\infty$ lies in $C\cap Q$ which, by Theorem \ref{thm:QFM}(ii), means that $ x_k \to x_\infty\in C\cap Q$.

\underline{\emph{Step 4.}} Finally, we show that assumption (vi') leads to norm convergence. Indeed, let $i\in I$ and this time, for every $k=0,1,2,\ldots$, let $m_k\in\{k,\ldots,k+s-1\}$ be the smallest natural number such that $i\in I_{m_k}$. The assumed bounded regularity of $T_i$, when combined with \eqref{pr:main2:Ti}, leads to
\begin{equation}\label{oddevengoestozero}
	\max_{i\in I_k} d(x_k,C_i) \to 0
\end{equation}
as $k \to \infty$. Moreover, by the triangle inequality and by the definition of the metric projection, we get
\begin{equation}
  d(x_k, C_i)
  \leq \|x_{k}-P_{C_i}(x_{m_k})\|
	\leq \|x_{k}-x_{m_k}\|+\|x_{m_k}-P_{C_i}(x_{m_k})\| \to 0
\end{equation}
as $k \to \infty$. The arbitrariness of $i$ implies that $\max_{i\in I}d(x_k, C_i) \to 0$ as $k \to \infty$. Furthermore, by definition, $x_k \in Q$. This, when combined with the bounded regularity of the family $\{Q\} \cup \{C_i \colon i \in I\}$ shows that $d(x_k, C \cap Q) \to 0$. By Theorem \ref{thm:QFM} (iii), we conclude that $x_k \to x_\infty \in C \cap Q$, which completes the proof.
\end{proof}

\begin{example}[Bounded Trajectory] \label{ex:phi2}
  Note here that for a constant $\varphi_i(x) := 1$, we have $\|e_k\| \leq r_k$. In particular, assumption (ii) ($\sum_{k=0}^\infty r_k < \infty$) enures that the trajectory $\{x_k\}_{k=0}^\infty$ is QF1 and hence, it must be bounded. This corresponds to Example \ref{ex:phi} (a).
\end{example}

 We finish this section by formulating  a corollary in which the assumptions of Theorems \ref{thm:main} and \ref{thm:main2} overlap.

\begin{corollary} \label{cor:proj}
  Let $\{x_k\}_{k=0}^\infty$ be a sequence defined  by \eqref{int:xk}--\eqref{int:betak} with
  \begin{equation}\label{}
    T_i := P_{C_i}, \quad \varphi_i := 1\quad \text{and} \quad r_k := \frac{1}{k^\alpha},\quad \text{where}\quad \alpha > 1.
  \end{equation}
  Assume that $C \cap Q \neq \emptyset$, $\alpha_k \in [\varepsilon, 2-\varepsilon]$, $\lambda_{i,k} \in [\varepsilon, 1]$ and $\{I_k\}_{k=0}^\infty$ is $s$-intermittent. Then
  \begin{enumerate}[(i)]
    \item $\{x_k\}_{k=0}^\infty$ converges weakly to some $x_\infty \in C \cap Q$.
    \item If $\{C_i \colon i \in I\}$ is boundedly regular, then the convergence is in norm.
    \item If $\interior(C) \cap Q \neq \emptyset$, then the convergence is finite.
  \end{enumerate}
\end{corollary}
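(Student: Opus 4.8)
The plan is to obtain the three conclusions by simply checking that the prescribed data $T_i:=P_{C_i}$, $\varphi_i:=1$ and $r_k:=k^{-\alpha}$ discharge all the hypotheses of Theorem~\ref{thm:main2} (for parts (i) and (ii)) and of Theorem~\ref{thm:main} (for part (iii)), and then quoting those theorems. Thus the proof is essentially a verification of hypotheses; the only points that are not pure bookkeeping are the asymptotics of $r_k=k^{-\alpha}$ and the regularity properties of metric projections.

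First I would record the properties of $\{r_k\}$. Since $\alpha>1$, the sequence $k^{-\alpha}$ is strictly decreasing to $0$ and $\sum_k r_k<\infty$; this is condition (ii) of Theorem~\ref{thm:main2}. Moreover $r_{k+1}/r_k=(1+1/k)^{-\alpha}\to 1$, so the decay is sublinear, hence slower than any geometric sequence in the sense of~\eqref{int:STAG}, which is condition (ii) of Theorem~\ref{thm:main}. The choice $\varphi_i\equiv 1$ trivially yields condition (iii) of both theorems with $\delta=\Delta=1$. For boundedness of $\{x_k\}$: in the setting of (i)--(ii), the error term $e_k$ appearing in the reformulation used in the proof of Theorem~\ref{thm:main2} satisfies $\|e_k\|\le r_k$, so $\sum_k\|e_k\|<\infty$ and Lemma~\ref{lem:QFM} (cf.\ Example~\ref{ex:phi2}) shows that $\{x_k\}$ is QF1 with respect to $C\cap Q$, hence bounded; in the setting of (iii), where in addition $\interior(C)\cap Q\neq\emptyset$, boundedness instead follows from Lemma~\ref{lem:basic}, exactly as in Example~\ref{ex:phi}(a). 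Conditions (iv) and (v) are assumed outright.

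Next I would record the properties of $T_i=P_{C_i}$. As recalled in the Metric Projection example, $P_{C_i}$ is a cutter with $\fix P_{C_i}=C_i$, and it satisfies the identity $\|P_{C_i}(x)-x\|=d(x,C_i)=d(x,\fix P_{C_i})$ for every $x\in\mathcal H$. This identity immediately makes $P_{C_i}$ linearly regular over every set with constant $1$, hence boundedly linearly regular, which is condition (vi) of Theorem~\ref{thm:main}, and also boundedly regular, part of condition (vi$'$) of Theorem~\ref{thm:main2}. Finally, $P_{C_i}$ is weakly regular: if $x_k\rightharpoonup x_\infty$ and $P_{C_i}(x_k)-x_k\to 0$, then $d(x_\infty,C_i)\le\liminf_k d(x_k,C_i)=\liminf_k\|P_{C_i}(x_k)-x_k\|=0$ by weak lower semicontinuity of $d(\cdot,C_i)$, so $x_\infty\in C_i$; this is condition (vi) of Theorem~\ref{thm:main2}. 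With all hypotheses in place, (i) is exactly the conclusion of Theorem~\ref{thm:main2}; (ii) is its norm-convergence refinement, whose extra hypothesis (vi$'$) asks the $T_i$ to be boundedly regular (true by the identity above) and the family $\{Q\}\cup\{C_i:i\in I\}$ to be boundedly regular (this we take from the hypothesis of (ii), recalling $x_k\in Q$, so $d(x_k,Q)=0$); and (iii) is Theorem~\ref{thm:main}, ``finite convergence'' meaning $x_k\in C\cap Q$ for some $k$, after which $T_i(x_k)=x_k$ for all $i$ forces $\beta_{i,k}(x_k)=0$ and hence $x_{k+1}=P_Q(x_k)=x_k$, so the iteration is in fact eventually stationary.

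Since every hypothesis has already been established in Sections~\ref{sec:finiteConv} and~\ref{sec:asymptotic}, there is no real obstacle; the result is a genuine corollary. The subtlest spots are the two facts that require a line of argument rather than a citation: that $k^{-\alpha}$ with $\alpha>1$ is summable yet decays more slowly than any geometric sequence, and that the identity $\|P_{C_i}(x)-x\|=d(x,\fix P_{C_i})$ makes projections automatically (boundedly) linearly regular and weakly regular. The one cosmetic point of care is that the hypothesis of part (ii) is stated for $\{C_i:i\in I\}$, whereas condition (vi$'$) of Theorem~\ref{thm:main2} is phrased for $\{Q\}\cup\{C_i:i\in I\}$; this should be read as the same assumption, as the projection iterates satisfy $x_k\in Q$ throughout.
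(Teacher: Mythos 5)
Your proposal is correct and is essentially the proof the paper intends: the corollary is stated without an explicit proof, as a direct combination of Theorems \ref{thm:main} and \ref{thm:main2}, and your verification of their hypotheses (summability and sub-geometric decay of $k^{-\alpha}$ for $\alpha>1$, the identity $\|P_{C_i}(x)-x\|=d(x,C_i)$ yielding bounded linear regularity, bounded regularity and weak regularity of the projections, and the boundedness of the trajectory via Examples \ref{ex:phi} and \ref{ex:phi2}) is exactly what is needed. Your reading of the hypothesis in part (ii) as the bounded regularity of the full family $\{Q\}\cup\{C_i\colon i\in I\}$ is indeed the interpretation required to invoke condition (vi$'$) of Theorem \ref{thm:main2}, since bounded regularity of $\{C_i\colon i\in I\}$ alone together with $x_k\in Q$ would only give $d(x_k,C)\to 0$, not $d(x_k,C\cap Q)\to 0$.
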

	
\bigskip\textbf{Acknowledgements.} We are very grateful to an anonymous
referee for pertinent comments and helpful suggestions.

\bigskip\textbf{Funding.}  This research was supported by the Israel Science Foundation (Grants No. 389/12 and 820/17), the Fund for the Promotion of Research at the Technion and by the Technion General Research Fund.
	
\small
\addcontentsline{toc}{section}{References}

\end{document}